\newcommand{\N}{\mathbb{N}}
\newcommand{\R}{\mathbb{R}}
\newcommand{\uu}{\boldsymbol{u}}
\newcommand{\UU}{\boldsymbol{U}}
\newcommand{\vv}{\boldsymbol{v}}
\newcommand{\ww}{\boldsymbol{w}}
\newcommand{\zz}{\boldsymbol{z}}
\newcommand{\bom}{\boldsymbol{\omega}}
\newcommand{\BA}{\boldsymbol{A}}
\newcommand{\BB}{\boldsymbol{B}}
\newcommand{\ff}{\boldsymbol{f}}
\newcommand{\Jeps}{\mathcal{J}_{\eps}}
\newcommand{\eps}{\varepsilon}
\newcommand{\weakto}{\rightharpoonup}
\newcommand{\weakstarto}{\overset{*}{\rightharpoonup}}
\newcommand{\BMO}{\mathrm{BMO}}
\newcommand{\pd}{\partial}
\newcommand{\rd}{\mathrm{d}}
\newcommand{\Grad}{\nabla}
\newcommand{\Div}{\nabla \cdot}
\newcommand{\Curl}{\nabla \times}
\newcommand{\Laplace}{\Delta}
\DeclareMathOperator*{\esssup}{ess\,sup}
\newcommand{\abs}[1]{\left| #1 \right|}
\newcommand{\norm}[1]{\| #1 \|}
\newcommand{\bignorm}[1]{\left\| #1 \right\|}
\newcommand{\inner}[2]{\langle #1 , #2 \rangle}
\theoremstyle{plain}
\newtheorem{theorem}{Theorem}[section]
\newtheorem{proposition}[theorem]{Proposition}
\newtheorem{lemma}[theorem]{Lemma}
\newtheorem{corollary}[theorem]{Corollary}
\newtheorem{definition}[theorem]{Definition}
\numberwithin{equation}{section}
\title[Existence and uniqueness for a coupled parabolic-elliptic model]{Existence and uniqueness for a coupled parabolic-elliptic model with applications to magnetic relaxation}
\author[D.\ S.\ McCormick]{David S.\ McCormick}
\thanks{DSMcC is a member of the Warwick ``MASDOC'' doctoral training centre, which is funded by EPSRC grant EP/HO23364/1. JCR is supported by an EPSRC Leadership Fellowship EP/G007470/1.}
\address{D.\ S.\ McCormick \\
Mathematics Institute \\
University of Warwick \\
Coventry, CV4 7AL \\
United Kingdom}
\email{d.s.mccormick@warwick.ac.uk}
\author[J.\ C.\ Robinson]{James C.\ Robinson}
\address{J.\ C.\ Robinson \\
Mathematics Institute \\
University of Warwick \\
Coventry, CV4 7AL \\
United Kingdom}
\email{j.c.robinson@warwick.ac.uk}
\author[J.\ L.\ Rodrigo]{Jose L.\ Rodrigo}
\address{J.\ L.\ Rodrigo \\
Mathematics Institute \\
University of Warwick \\
Coventry, CV4 7AL \\
United Kingdom}
\email{j.l.rodrigo@warwick.ac.uk}
\date{\today}
\keywords{Magnetic relaxation, coupled parabolic-elliptic problem, Ladyzhenskaya inequality, interpolation inequality.}
\subjclass[2010]{
	Primary: 35M33, 35Q35, 76W05. Secondary: 46B70, 46E30.
}
\begin{document}

\begin{abstract}
We prove existence, uniqueness and regularity of weak solutions of a coupled parabolic-elliptic model in two dimensions; we consider the standard equations of magnetohydrodynamics with the advective terms removed from the velocity equation. Despite the apparent simplicity of the model, the proof requires results that are at the limit of what is available, including elliptic regularity in $L^{1}$ and a strengthened form of the Ladyzhenskaya inequality
\[
\norm{f}_{L^{4}} \leq c \norm{f}_{L^{2,\infty}}^{1/2} \norm{\Grad f}_{L^{2}}^{1/2},
\]
which we derive using the theory of interpolation. The model has applications to the method of magnetic relaxation, introduced by Moffatt (J.~Fluid.~Mech.\ \textbf{159}, 359--378, 1985), to construct stationary Euler flows with non-trivial topology.
\end{abstract}

\maketitle

\section{Introduction}

In this paper we prove global existence and uniqueness of solutions to the following coupled parabolic-elliptic system of equations related to magnetohydrodynamics (MHD), for a velocity field $\uu$, a magnetic field $\BB$ and a pressure field $p$ defined on $\Omega \subset \R^{2}$, as follows:
\begin{subequations}
\label{eqn:StokesMHD}
\begin{align}
- \nu \Laplace \uu + \Grad p_{*} &= (\BB \cdot \Grad) \BB, \label{eqn:StokesMHD-u} \\
\frac{\pd \BB}{\pd t} + (\uu \cdot \Grad) \BB - \eta \Laplace \BB &= (\BB \cdot \Grad) \uu, \label{eqn:StokesMHD-B} \\
\Div \uu = \Div \BB &= 0, \label{eqn:StokesMHD-div}
\end{align}
\end{subequations}
where $p_{*} = p + \frac{1}{2} \abs{\BB}^{2}$ is the total pressure. Here $\nu > 0$ is the coefficient of viscosity, and $\eta > 0$ is the coefficient of magnetic resistivity. 

This model has interesting analogies with the vorticity formulation of the 3D Navier--Stokes and Euler equations, as well as with the 2D surface quasigeostrophic equations. Recall that the vorticity formulation of the Navier--Stokes equations in three dimensions is
\begin{equation}
\label{eqn:NSE-Vorticity}
\frac{\pd \bom}{\pd t} + (\uu \cdot \Grad) \bom - \eta \Laplace \bom = (\bom \cdot \Grad) \uu,
\end{equation}
where $\uu = K \ast \bom$ is given by the Biot--Savart law, with $K$ a homogeneous kernel of degree $-2$ (or rather of degree $1-n$ in dimension $n$). Our two-dimensional model has a very similar form --- compare \eqref{eqn:NSE-Vorticity} with \eqref{eqn:StokesMHD-B} --- but $\uu$ is instead given by
\[
\uu = K \ast (\BB \otimes \BB),
\]
where $K$ involves derivatives of the fundamental solution of the Stokes equation, and is homogeneous of degree $-1$. Unlike the 3D Navier--Stokes equations, for which existence and uniqueness of solutions for all time remains open, our two-dimensional model retains the essential features of the nonlinearities but admits a unique solution for all time.

Indeed, the main purpose of this paper is to prove the following theorem.

\begin{theorem}
\label{thm:MainTheorem}
Let $\Omega$ be one of the following:
\begin{itemize}
\item[$\bullet$] $\Omega \subset \R^{2}$ is a Lipschitz bounded domain with Dirichlet boundary conditions;
\item[$\bullet$] $\Omega = \R^{2}$; or
\item[$\bullet$] $\Omega = [0,1]^{2}$ with periodic boundary conditions.
\end{itemize}
Given an initial condition $\BB_{0} \in L^{2}(\Omega)$ with $\Div \BB_{0} = 0$, for any $T > 0$ there exists a unique weak solution $(\uu, \BB)$ of \eqref{eqn:StokesMHD}; that is, a pair of functions $(\uu, \BB)$ such that
\[
\uu \in L^{\infty}(0, T; L^{2,\infty}(\Omega)) \cap L^{2}(0, T; H^{1}(\Omega))
\]
and
\[
\BB \in L^{\infty}(0, T; L^{2}(\Omega)) \cap L^{2}(0, T; H^{1}(\Omega))
\]
satisfying \eqref{eqn:StokesMHD} as an equality in $L^{2}(0, T; H^{-1}(\Omega))$. Furthermore, for any $T > \eps > 0$ and any $k \in \N$,
\[
\uu, \BB \in L^{\infty}(\eps, T; H^{k}(\Omega)) \cap L^{2}(\eps, T; H^{k+1}(\Omega)).
\]
\end{theorem}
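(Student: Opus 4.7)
The plan is to use a Galerkin approximation and pass to the limit via compactness, the central algebraic observation being a cancellation between the two equations that converts the quadratic coupling into a dissipation; uniqueness then requires the strengthened Ladyzhenskaya inequality advertised in the abstract, and regularity comes from a parabolic bootstrap.

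For a priori estimates, testing \eqref{eqn:StokesMHD-B} against $\BB$ kills the transport term by $\Div\uu=0$; testing \eqref{eqn:StokesMHD-u} against $\uu$ yields $\int(\BB\cdot\Grad)\BB\cdot\uu = \nu\|\Grad\uu\|_{L^2}^2$; and integration by parts (using $\Div\BB=0$) gives $\int(\BB\cdot\Grad)\uu\cdot\BB = -\int(\BB\cdot\Grad)\BB\cdot\uu$. Combining,
\[
\tfrac{1}{2}\tfrac{d}{dt}\|\BB\|_{L^2}^2 + \eta\|\Grad\BB\|_{L^2}^2 + \nu\|\Grad\uu\|_{L^2}^2 = 0,
\]
yielding $\BB\in L^\infty(0,T;L^2)\cap L^2(0,T;H^1)$ and $\Grad\uu\in L^2(0,T;L^2)$. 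To obtain $\uu\in L^\infty(0,T;L^{2,\infty})$ I would write $(\BB\cdot\Grad)\BB=\Div(\BB\otimes\BB)$, note that $\BB\otimes\BB\in L^\infty(0,T;L^1)$, and invoke the weak-type ($L^1$) Stokes regularity referred to in the abstract. Existence then follows from a standard Galerkin scheme with a basis adapted to the boundary conditions (Stokes-operator eigenfunctions on a bounded domain, trigonometric on the torus, mollification on $\R^2$), with $\uu_N$ recovered from $\BB_N$ via Stokes at each level; an $H^{-1}$ bound on $\pd_t\BB_N$ (from the classical Ladyzhenskaya inequality applied to the nonlinear terms) together with Aubin--Lions gives strong convergence $\BB_N\to\BB$ in $L^2(0,T;L^2)$, enough to pass to the limit in the quadratic nonlinearities.

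For uniqueness, writing $\ww=\uu_1-\uu_2$ and $\vv=\BB_1-\BB_2$, the same cancellation yields
\[
\tfrac{1}{2}\tfrac{d}{dt}\|\vv\|_{L^2}^2 + \eta\|\Grad\vv\|_{L^2}^2 + \nu\|\Grad\ww\|_{L^2}^2 = \text{cross terms involving }\BB_1\text{ and }\uu_1,
\]
schematically $\int(\ww\cdot\Grad)\BB_1\cdot\vv$ and $\int(\vv\cdot\Grad)\BB_1\cdot\ww$. Because $\ww$ is controlled only in $L^{2,\infty}$ and not in $L^2$, the classical Ladyzhenskaya inequality is unavailable; the strengthened form $\|\ww\|_{L^4}\le c\|\ww\|_{L^{2,\infty}}^{1/2}\|\Grad\ww\|_{L^2}^{1/2}$ is essential. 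Combined with the Stokes bound $\|\ww\|_{L^{2,\infty}}\le c\|\vv\|_{L^2}(\|\BB_1\|_{L^2}+\|\BB_2\|_{L^2})$ from the same $L^1$ elliptic regularity, Young's inequality closes the estimate to $\tfrac{d}{dt}\|\vv\|_{L^2}^2\le g(t)\|\vv\|_{L^2}^2$ with $g\in L^1(0,T)$, and Gronwall forces $\vv\equiv 0$. Higher regularity follows from a parabolic bootstrap: since $\BB\in L^2(0,T;H^1)$ implies $\BB(t_0)\in H^1$ for a.e.\ $t_0>0$, iterated energy estimates against $(-\Laplace)^k\BB$ upgrade the solution to $H^k$ on $[\eps,T]$ for each $k$.

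The hard part will be the uniqueness step: the velocity $\uu$ produced by Stokes with only $L^1$ source lives outside $L^2$, so every standard estimate has to be replaced by a Lorentz-space analogue. The two non-standard preparatory results --- the $L^1$ Stokes regularity giving $\uu\in L^{2,\infty}$, and the strengthened Ladyzhenskaya inequality obtained via real interpolation --- are precisely what make the closure possible.
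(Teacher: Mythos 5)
Your proposal is correct and follows essentially the same route as the paper: the energy cancellation giving $\tfrac12\tfrac{d}{dt}\|\BB\|_{L^2}^2+\eta\|\Grad\BB\|_{L^2}^2+\nu\|\Grad\uu\|_{L^2}^2=0$, the $L^1$ Stokes regularity yielding $\uu\in L^\infty(0,T;L^{2,\infty})$, Galerkin/mollification with Aubin--Lions, the weak Ladyzhenskaya inequality to close the uniqueness Gr\"onwall argument, and an $H^k$ bootstrap for regularity. The only cosmetic difference is that in the uniqueness step you add the two difference-equation energies to keep $\nu\|\Grad\ww\|_{L^2}^2$ on the left, whereas the paper bounds $\|\Grad\ww\|_{L^2}$ and $\|\ww\|_{L^{2,\infty}}$ separately from the elliptic equation and substitutes; both close the same way.
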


Our interest in system \eqref{eqn:StokesMHD} arises from its connection with the method of \emph{magnetic relaxation}, an idea due to Moffatt \cite{art:Moffatt1985}. He considers the related full MHD system:
\begin{subequations}
\label{eqn:MHD}
\begin{align}
\frac{\pd \uu}{\pd t} + (\uu \cdot \Grad) \uu - \nu \Laplace \uu + \Grad p_{*} &= (\BB \cdot \Grad) \BB, \\
\frac{\pd \BB}{\pd t} + (\uu \cdot \Grad) \BB - \eta \Laplace \BB &= (\BB \cdot \Grad) \uu, \\
\Div \uu = \Div \BB &= 0.
\end{align}
\end{subequations}
Formally, when $\eta = 0$, we obtain the standard energy estimate
\[
\frac{1}{2} \frac{\rd}{\rd t} \left( \norm{\uu}_{L^{2}}^{2} + \norm{\BB}_{L^{2}}^{2} \right) + \nu \norm{\Grad \uu}_{L^{2}}^{2} = 0;
\]
so as long as $\uu$ is not identically zero, the energy should decay. Furthermore, by using the so-called \emph{magnetic helicity}, which is preserved under the flow, we can find a lower bound for the energy of $\BB$: if $\mathscr{H}_{M} := \int_{\Omega} \BA \cdot \BB$, where $\Curl \BA = \BB$ is a vector potential for $\BB$, then
\[
C \norm{\BB}_{L^{2}}^{4} \geq \norm{\BB}_{L^{2}}^{2} \norm{\BA}_{L^{2}}^{2} \geq \left(\int_{\Omega} \BA \cdot \BB \right)^{2} = |\mathscr{H}_{M}|^{2} > 0.
\]
In other words, the magnetic forces on a viscous non-resistive plasma should come to equilibrium, so that the fluid velocity $\uu$ tends to zero. We are left with a steady magnetic field $\BB$ that satisfies $(\BB \cdot \Grad) \BB - \Grad p_{*} = 0$, which up to a change of sign for the pressure are the stationary Euler equations.

These arguments are heuristic, and as yet there is no rigorous proof that this method will yield a stationary Euler flow. The first problem is that it has not yet been proved that the system \eqref{eqn:MHD} with $\eta = 0$ has a unique solution for all time, even in two dimensions: the best known result is found in \cite{art:JiuNiu2006}, where short-time existence of strong solutions is proved by means of a ``vanishing resistivity'' argument, as well as a conditional regularity result; the conditional regularity result was later extended in \cite{art:FanOzawa2009}.

With $\eta > 0$, however, the existence theory for \eqref{eqn:MHD} is in a similar state to the Navier--Stokes equations, with global existence of weak solutions in two or three dimensions, and uniqueness in two dimensions; see \cite{art:DuvautLions1972} and \cite{art:SermangeTemam1983}. (Interestingly, global existence of weak solutions in two dimensions for the case $\nu = 0$ but $\eta > 0$ was proven in \cite{art:Kozono1989}, with various extensions in \cite{art:CST2003} and \cite{art:CT2002}, and conditional regularity results in \cite{art:FanOzawa2009} and \cite{art:ZhouFan2011}.)

The second problem is that, even with global existence and uniqueness, the system may not possess a limit state. Assuming that the equations have a smooth solution for all time, and furthermore that $\norm{\BB}_{\infty} \leq M$ for all time, N\'{u}\~{n}ez~\cite{art:Nunez2007} showed that (with $\eta = 0$) the kinetic energy must decay to zero, but that the magnetic field may not have a weak limit when a decaying forcing $f \in L^{2}(0, \infty; L^{2}(\Omega))$ is added to the $\uu$ equation.

If all we are interested in is the limiting state, the dynamical model used to obtain that steady state is not particularly important: in a talk given at the University of Warwick, Moffatt~\cite{misc:talkMoffatt2009} argued that dropping the acceleration terms from the $\uu$ equation and working with a ``Stokes'' model --- such as equations~\eqref{eqn:StokesMHD} --- might prove more mathematically amenable. As a first step towards a rigorous theory of magnetic relaxation for this model, this paper thus establishes existence and uniqueness theory for \eqref{eqn:StokesMHD} in the case $\eta > 0$ in two dimensions.

The proof of Theorem~\ref{thm:MainTheorem} is divided into several sections:
\begin{itemize}
\item In Section~\ref{sec:Interpolation}, we introduce the weak $L^{p}$ spaces, denoted $L^{p,\infty}$, and use the theory of interpolation spaces to prove the following generalised version of the 2D Ladyzhenskaya inequality:
\[
\norm{f}_{L^{4}} \leq c \norm{f}_{L^{2,\infty}}^{1/2} \norm{\Grad f}_{L^{2}}^{1/2}.
\]
\item In Section~\ref{sec:StokesElliptic} we consider elliptic regularity for the Stokes equations 
\begin{align*}
- \nu \Laplace \uu + \Grad p &= \Div \ff, \\
\Div \uu &= 0,
\end{align*}
and show that $\uu \in L^{2,\infty}$ whenever $\ff \in L^{1}$. 
\item In Section~\ref{sec:ExistenceUniqueness}, we use the results of the previous two sections to prove global existence and uniqueness of weak solutions for \eqref{eqn:StokesMHD} in a bounded domain $\Omega$ and the whole of $\R^{2}$.
\item In Section~\ref{sec:HigherOrder} we prove higher-order estimates to show that the solutions stay as smooth as the initial data permits for all time, and hence that after any arbitrary time $\eps > 0$ the solution is smooth.
\end{itemize}

\section{Interpolation and Ladyzhenskaya's inequality}
\label{sec:Interpolation}

In order to prove existence and uniqueness for our system \eqref{eqn:StokesMHD}, we will require a variant of Ladyzhenskaya's inequality. We first recall the standard inequality proved by Ladyzhenskaya~\cite{art:Ladyzhenskaya1958}: if $\Omega \subset \R^{2}$ is a bounded Lipschitz domain, then for $u \in H^{1}(\Omega)$,
\begin{equation}
\label{eqn:Ladyzhenskaya}
\norm{u}_{L^{4}} \leq c \norm{u}_{L^{2}}^{1/2} \norm{u}_{H^{1}}^{1/2}.
\end{equation}
One can prove this simply by using the embedding $H^{1/2} \subset L^{4}$ and interpolating $H^{1/2}$ between $L^{2}$ and $H^{1}$:
\[
\norm{u}_{L^{4}} \leq c \norm{u}_{H^{1/2}} \leq c \norm{u}_{L^{2}}^{1/2} \norm{u}_{H^{1}}^{1/2}.
\]
But one can also prove it directly for $u \in C^{1}_{c}(\Omega)$ (see \cite{art:Ladyzhenskaya1958}, or \cite{book:FMRT}, equation (4.8) on p.~17): we can write $u^{2} = 2 \int u \pd_{j} u \, \rd x_{j}$, and then integrate $u^{4} = (u^{2})^{2}$ to get
\begin{align*}
&\iint_{\R^{2}} |u^{4}| \, \rd x_{1} \, \rd x_{2} \\
&\qquad \qquad \leq \left( \int_{-\infty}^{\infty} \max_{x_{2}} |u(x)|^{2} \, \rd x_{1} \right) \left( \int_{-\infty}^{\infty} \max_{x_{1}} |u(x)|^{2} \, \rd x_{2} \right) \\
&\qquad \qquad \leq 4 \left( \iint_{\R^{2}} \max_{x_{2}} \abs{u \pd_{2} u} \, \rd x_{1} \rd x_{2} \right) \left( \iint_{\R^{2}} \max_{x_{1}} \abs{u \pd_{1} u} \, \rd x_{1} \rd x_{2} \right)
\end{align*}
and the result follows by applying the Cauchy--Schwarz inequality. The result for $u \in H^{1}(\Omega)$ follows by density of $C^{1}_{c}(\Omega)$ in $H^{1}(\Omega)$.

The variant of Ladyzhenskaya's inequality that we require is the standard inequality with $\norm{u}_{L^{2}}$ replaced with $\norm{u}_{L^{2, \infty}}$, where $L^{2, \infty}(\R^{2})$ is the \emph{weak $L^{2}$ space}:
\begin{equation}
\label{eqn:WeakLadyzhenskaya}
\norm{f}_{L^{4}} \leq c \norm{f}_{L^{2,\infty}}^{1/2} \norm{\Grad f}_{L^{2}}^{1/2}.
\end{equation}
In fact, in Section~\ref{sec:InterpSpaces} we will prove the stronger inequality
\begin{equation}
\label{eqn:WeakGN-BMO}
\norm{f}_{L^{p}} \leq c \norm{f}_{L^{q,\infty}}^{q/p} \norm{f}_{\BMO}^{1-q/p}
\end{equation}
for every $f \in L^{q,\infty}(\R^{n}) \cap \BMO(\R^{n})$, using the theory of interpolation spaces.

The inequality \eqref{eqn:WeakGN-BMO} is not altogether new: an alternative proof is sketched in \cite{art:KMW2007}, and it is a strengthening of the inequality
\begin{equation}
\label{eqn:GN-BMO}
\norm{f}_{L^{p}} \leq c \norm{f}_{L^{q}}^{q/p} \norm{f}_{\BMO}^{1-q/p},
\end{equation}
which has been proven a number of times before; see \cite{art:ChenZhu2005}, \cite{art:KozonoWadade2008}, \cite{art:DongXiao2011} and \cite{art:AzzamBedrossian}. In particular, the elegant proof of \eqref{eqn:GN-BMO} in Chen~\&~Zhu~\cite{art:ChenZhu2005}, which uses the John--Nirenberg inequality for functions in $\BMO$, is adapted in McCormick~et~al.~\cite{art:MJM} to give an alternative proof of \eqref{eqn:WeakGN-BMO}.

\subsection{Weak $L^{p}$ spaces}
\label{sec:WeakLp}

The weak $L^{p}$ spaces are defined as follows (see.~\cite{book:GrafakosClassical}, \S1.1, for example). Let $\Omega \subset \R^{n}$ be measurable. Given a measurable, a.e.-finite function $f \colon \Omega \to \R$, we define its \emph{distribution function} $d_{f} \colon [0, \infty) \to [0, \infty]$ by
\[
d_{f}(\alpha) := \mu \{ x \in \Omega : |f(x)| > \alpha \}.
\]
Then, given $1 \leq p < \infty$, the \emph{weak $L^{p}$ space}, denoted $L^{p,\infty}(\Omega)$, consists of all measurable, a.e.-finite functions $f$ for which the quantity
\[
\norm{f}_{p, \infty} := \inf \left\{ C > 0 : d_{f}(\alpha) \leq \frac{C^{p}}{\alpha^{p}} \text{ for all } \alpha > 0 \right\}
\]
is finite (see \cite{book:GrafakosClassical}, Definition~1.1.5). Note that $\norm{\cdot}_{p,\infty}$ is \emph{not} a norm, but only a quasi-norm --- the triangle inequality fails to hold, but instead we have the replacement inequality
\[
\norm{f+g}_{L^{p,\infty}} \leq 2 (\norm{f}_{L^{p,\infty}} + \norm{g}_{L^{p,\infty}}).
\]

It is fairly simple to see that $L^{p}(\Omega) \subsetneq L^{p,\infty}(\Omega)$: note that
\[
\norm{f}_{L^{p}}^{p} = \int_{\R^{n}} |f(x)|^{p} \geq \int\limits_{\{ x : |f(x)| > \alpha \} \hspace{-24pt}} |f(x)|^{p} \geq \alpha^{p} d_f(\alpha),
\]
so if $f \in L^{p}(\Omega)$ then $d_{f}(\alpha) \leq \norm{f}_{L^{p}}^{p} \alpha^{-p}$, and hence $f \in L^{p,\infty}(\Omega)$. However, the function $f(x) = |x|^{-n/p}$ is in $L^{p,\infty}(\R^{n})$, even though it is clearly not in $L^{p}(\R^{n})$: notice that
\[
d_{f}(\alpha) = \mu \{ x \in \R^{n} : |x|^{-n/p} > \alpha \} = \mu \{ x \in \R^{n} : |x| < \alpha^{-p/n} \} = \omega_{n} \alpha^{-p},
\]
where $\omega_{n}$ is the volume of the unit ball in $\R^{n}$, so $\norm{f}_{L^{p,\infty}} = \omega_{n}^{1/p}$.

Just as we can interpolate between $L^{p}$ spaces using H{\"o}lder's inequality, we can interpolate between weak $L^{p}$ spaces to get to a strong $L^{p}$ space in between. The following result is proved in \cite{book:GrafakosClassical}, Proposition~1.1.14.

\begin{lemma}[Weak--strong interpolation]
\label{lemma:WeakStrongInterp}
Let $1 < q < p < r  < \infty$. Then
\[
\norm{f}_{L^{p}} \leq c\norm{f}_{L^{q,\infty}}^{1-\alpha} \norm{f}_{L^{r,\infty}}^{\alpha},
\]
where $\frac{1 - \alpha}{q} + \frac{\alpha}{r} = \frac{1}{p}$.
\end{lemma}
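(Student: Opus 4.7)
\medskip

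\noindent\textbf{Proof plan.} The plan is to use the layer-cake representation together with the two bounds on the distribution function $d_f$ coming from membership in $L^{q,\infty}$ and $L^{r,\infty}$, and then optimise over a splitting parameter. Since $p < \infty$, we have the standard identity
\[
\norm{f}_{L^{p}}^{p} = p \int_{0}^{\infty} \alpha^{p-1} d_{f}(\alpha) \, \rd \alpha.
\]
For a parameter $A > 0$ to be chosen later, split this integral at $A$. On $[0, A]$ I would use the weak-$L^{q}$ bound $d_{f}(\alpha) \leq \norm{f}_{L^{q,\infty}}^{q} \alpha^{-q}$, and on $[A, \infty)$ I would use the weak-$L^{r}$ bound $d_{f}(\alpha) \leq \norm{f}_{L^{r,\infty}}^{r} \alpha^{-r}$. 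Because $q < p < r$, both resulting integrals converge, yielding
\[
\norm{f}_{L^{p}}^{p} \leq \frac{p}{p-q} A^{p-q} \norm{f}_{L^{q,\infty}}^{q} + \frac{p}{r-p} A^{p-r} \norm{f}_{L^{r,\infty}}^{r}.
\]

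\medskip

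\noindent The next step is to choose $A$ to balance the two terms, i.e.\ so that $A^{p-q} \norm{f}_{L^{q,\infty}}^{q} = A^{p-r} \norm{f}_{L^{r,\infty}}^{r}$, which forces
\[
A = \norm{f}_{L^{r,\infty}}^{r/(r-q)} \norm{f}_{L^{q,\infty}}^{-q/(r-q)}.
\]
Substituting back, both terms become equal to $\norm{f}_{L^{q,\infty}}^{q(r-p)/(r-q)} \norm{f}_{L^{r,\infty}}^{r(p-q)/(r-q)}$, so after taking $p$-th roots one obtains
\[
\norm{f}_{L^{p}} \leq c \, \norm{f}_{L^{q,\infty}}^{q(r-p)/[p(r-q)]} \norm{f}_{L^{r,\infty}}^{r(p-q)/[p(r-q)]}.
\]
A short algebraic check, using the defining relation $(1-\alpha)/q + \alpha/r = 1/p$, confirms that the exponents here are precisely $1-\alpha$ and $\alpha$ respectively: solving for $\alpha$ gives $\alpha = r(p-q)/[p(r-q)]$ and $1-\alpha = q(r-p)/[p(r-q)]$.

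\medskip

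\noindent There is no real obstacle here; the main thing to be careful about is that the hypothesis $1 < q < p < r < \infty$ is needed only to ensure convergence of the two truncated integrals (so the factors $1/(p-q)$ and $1/(r-p)$ are finite), and the quasi-norm nature of $\norm{\cdot}_{L^{p,\infty}}$ plays no role because the argument uses only the pointwise bound on $d_f$ that defines the quasi-norm. The choice of $A$ is the only nontrivial step, and it is forced by homogeneity.
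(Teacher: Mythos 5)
Your proof is correct and is essentially the standard argument: the paper itself gives no proof but cites Proposition~1.1.14 of Grafakos, whose proof is exactly this layer-cake decomposition with the splitting level $A$ chosen by homogeneity, yielding the same constant $c = \bigl(\tfrac{p}{p-q} + \tfrac{p}{r-p}\bigr)^{1/p}$. The only cosmetic issue is that you use $\alpha$ both as the layer-cake integration variable and as the interpolation exponent in the statement; rename one of them.
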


We can also use weak $L^{p}$ spaces to weaken the standard Young's inequality for convolutions; see \cite{book:GrafakosClassical}, Theorem~1.2.13.

\begin{theorem}[Young's inequality]
\label{thm:Young}
Let $1 \leq p < \infty$ and $1 < q, r < \infty$ satisfy $\frac{1}{q} + 1 = \frac{1}{p} + \frac{1}{r}$. Then there exists a constant $C_{p,q,r} > 0$ such that, for all $f \in L^{p}$ and all $g \in L^{r, \infty}$, we have
\[
\norm{f \ast g}_{L^{q, \infty}} \leq c_{p,q,r} \norm{f}_{L^{p}} \norm{g}_{L^{r,\infty}}.
\]
\end{theorem}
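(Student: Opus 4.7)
The plan is to prove this weak-type Young inequality by the classical Marcinkiewicz-style truncation argument, exploiting the fact that the ordinary Young's inequality for strong $L^{p}$ spaces is already known. Fix $g \in L^{r,\infty}$ with $\norm{g}_{L^{r,\infty}} = 1$ (by scaling) and regard $T_{g} \colon f \mapsto f \ast g$ as a linear operator on $L^{p}$; the goal is to show $T_{g}$ is of weak type $(p,q)$.

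Fix $\alpha > 0$ and a level $M > 0$ to be chosen. I would decompose $g = g^{M} + g_{M}$, where
\[
g^{M} := g\, \chi_{\{|g| \leq M\}}, \qquad g_{M} := g\, \chi_{\{|g| > M\}}.
\]
Using the layer-cake identity together with the defining bound $d_{g}(\lambda) \leq \lambda^{-r}$, a direct computation gives, for any $s_{1} > r$ and $s_{2} < r$,
\[
\norm{g^{M}}_{L^{s_{1}}}^{s_{1}} \leq \frac{s_{1}}{s_{1}-r}\, M^{s_{1}-r}, \qquad \norm{g_{M}}_{L^{s_{2}}}^{s_{2}} \leq \frac{s_{2}}{r-s_{2}}\, M^{s_{2}-r}.
\]
Now pick exponents $q_{1} > q$ and $q_{2} < q$ satisfying the classical Young relations $1/q_{i} + 1 = 1/p + 1/s_{i}$; since both $s_{1}, s_{2}$ can be made to straddle $r$, the corresponding $q_{1}, q_{2}$ straddle $q$. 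By the standard Young inequality,
\[
\norm{f \ast g^{M}}_{L^{q_{1}}} \leq \norm{f}_{L^{p}} \norm{g^{M}}_{L^{s_{1}}}, \qquad \norm{f \ast g_{M}}_{L^{q_{2}}} \leq \norm{f}_{L^{p}} \norm{g_{M}}_{L^{s_{2}}}.
\]

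Now I would combine these via $d_{f \ast g}(\alpha) \leq d_{f \ast g^{M}}(\alpha/2) + d_{f \ast g_{M}}(\alpha/2)$ and Chebyshev's inequality to convert each strong norm estimate into a distribution bound; this yields an estimate of the form
\[
d_{f \ast g}(\alpha) \leq C\, \alpha^{-q_{1}} \norm{f}_{L^{p}}^{q_{1}} M^{q_{1}(s_{1}-r)/s_{1}} + C\, \alpha^{-q_{2}} \norm{f}_{L^{p}}^{q_{2}} M^{q_{2}(s_{2}-r)/s_{2}}.
\]
Finally, optimise by choosing $M$ as a power of $\alpha / \norm{f}_{L^{p}}$ that balances the two terms, using the scaling relation $1/q + 1 = 1/p + 1/r$ to verify that the exponents collapse to exactly $\alpha^{-q} \norm{f}_{L^{p}}^{q}$, giving the desired weak-type $(p, q)$ bound and hence the inequality stated (after restoring $\norm{g}_{L^{r,\infty}}$ by homogeneity).

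The main obstacle is purely bookkeeping: one must choose $s_{1}, s_{2}, q_{1}, q_{2}$ admissible for classical Young and lying on the correct side of $r, q$, and then verify that the algebra of exponents really does collapse to the target scaling under the hypothesis $1/q + 1 = 1/p + 1/r$. The edge case $p = 1$ (where the classical Young inequality on one side becomes $L^{1} \ast L^{s} \to L^{s}$) needs to be checked but presents no real difficulty; the strict inequalities $1 < q, r < \infty$ in the hypothesis guarantee that both $s_{1} > r$ and $s_{2} < r$ can be realised.
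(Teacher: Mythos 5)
The paper does not prove this statement at all: it is quoted as a known result with a citation to Grafakos, \emph{Classical Fourier Analysis}, Theorem~1.2.13, so there is no in-paper argument to compare against. Your proposal is a correct proof, and it is essentially the standard truncation argument used in that reference. The exponent bookkeeping does close: using $1/q_i+1=1/p+1/s_i$ and $1/q+1=1/p+1/r$ one finds $q_i(s_i-r)/s_i=r(q_i-q)/q$, so the choice $M=\bigl(\alpha/\norm{f}_{L^{p}}\bigr)^{q/r}$ makes both terms scale as $\alpha^{-q}\norm{f}_{L^{p}}^{q}$, exactly as you claim; the admissibility constraints ($s_2\geq 1$, $1/p+1/s_i>1$) are all satisfiable because $1<q,r<\infty$. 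The one point worth flagging is a cosmetic simplification available in the textbook version: instead of two finite exponents $q_1>q>q_2$ one may take $s_1=p'$, so that $f\ast g^{M}$ is bounded in $L^{\infty}$ and the level $M$ can be chosen to make $d_{f\ast g^{M}}(\alpha/2)$ vanish outright, leaving only a single Chebyshev estimate on the large part $g_{M}\in L^{1}$ and no balancing of two power laws. (Also, your bound on $\norm{g_{M}}_{L^{s_2}}^{s_2}$ should carry the constant $r/(r-s_2)$ rather than $s_2/(r-s_2)$, since $d_{g_M}(\lambda)=d_g(M)$ for $\lambda<M$ contributes an extra term; this affects nothing.)
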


\subsection{Interpolation spaces}
\label{sec:InterpSpaces}

In order to prove our weak version of Ladyzhenskaya's inequality, we will use some of the standard theory of \emph{interpolation spaces}. We recall here the basic facts we require: for full details, see the books of Bennett and Sharpley \cite{book:BennettSharpley}, \S5.1, and Bergh and L\"ofstr\"om \cite{book:BerghLofstrom}, \S3.1.

Let $(X_{0}, X_{1})$ be a compatible couple of Banach spaces (that is, there is a Hausdorff topological vector space $\mathfrak{X}$ such that $X_{0}$ and $X_{1}$ embed continuously into $\mathfrak{X}$). The \emph{$K$-functional} is defined for each $f \in X_{0} + X_{1}$ and $t > 0$ by
\[
K(f, t) = K(f, t; X_{0}, X_{1}) := \inf \{ \norm{f_{0}}_{X_{0}} + t \norm{f_{1}}_{X_{1}} : f = f_{0} + f_{1} \}
\]
where the infimum is taken over all representations $f = f_{0} + f_{1}$ of $f$ with $f_{0} \in X_{0}$ and $f_{1} \in X_{1}$.

For $0 \leq \theta \leq 1$, we define the interpolation space $(X_{0}, X_{1})_{\theta, \infty}$ as the space of all $f \in X_{0} + X_{1}$ for which the functional
\[
\norm{f}_{\theta, \infty} := \sup_{0 < t < \infty} t^{-\theta} K(f,t)
\]
is finite. A very useful property of interpolation spaces is the estimate on the norms:
\begin{equation}
\label{eqn:InterpolationEstimate}
\norm{f}_{\theta, \infty} \leq c \norm{f}_{X_{0}}^{1-\theta} \norm{f}_{X_{1}}^{\theta},
\end{equation}
when $0 \leq \theta \leq 1$ (see \cite{book:BerghLofstrom}, \S3.5, p.~49).

As a simple example of interpolation, note that
\[
(L^{1}(\R^{n}), L^{\infty}(\R^{n}))_{1 - 1/p, \infty} = L^{p,\infty}(\R^{n})
\]
if $p > 1$ (see \cite{book:BennettSharpley}, Chapter 5, Theorem~1.9). This is hardly surprising given that the definition of interpolation spaces is modelled on that of weak $L^{p}$ spaces. In fact, this equality remains true with $L^{\infty}$ replaced with $\BMO$ (see \cite{book:BennettSharpley}, Chapter 5, Theorem 8.11):
\[
(L^{1}(\R^{n}), \BMO(\R^{n}))_{1 - 1/p, \infty} = L^{p,\infty}(\R^{n}).
\]

The so-called \emph{reiteration theorem} allows us to interpolate between interpolation spaces: it says that when we interpolate between two interpolation spaces of the same couple $(X_{0}, X_{1})$, we get another interpolation space in the same family. 

\begin{theorem}[Reiteration Theorem]
\label{thm:Reiteration}
Let $(X_{0}, X_{1})$ be a compatible couple of Banach spaces, and let $0 \leq \theta_{0} < \theta_{1} \leq 1$. Set $A_{0} = (X_{0}, X_{1})_{\theta_{0}, \infty}$ and $A_{1} = (X_{0}, X_{1})_{\theta_{1}, \infty}$. If $0 < \theta < 1$, then
\[
(A_{0}, A_{1})_{\theta, \infty} = (X_{0}, X_{1})_{\theta', \infty}
\]
providing $\theta' = (1-\theta) \theta_{0} + \theta \theta_{1}$.
\end{theorem}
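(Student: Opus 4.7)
The plan is to establish the equality $(A_0, A_1)_{\theta, \infty} = (X_0, X_1)_{\theta', \infty}$ by proving matching two-sided estimates between the quasi-norms $\sup_t t^{-\theta} K(f, t; A_0, A_1)$ and $\sup_s s^{-\theta'} K(f, s; X_0, X_1)$. Setting $\kappa = \theta_1 - \theta_0 > 0$, the key substitution will be $t = s^{\kappa}$; this is what makes reiteration work, since $\theta' = (1-\theta)\theta_0 + \theta\theta_1 = \theta_0 + \theta \kappa$, so that the exponent $\theta'$ arises precisely from rescaling by the power $\kappa$.

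First I would handle the easy inclusion $(A_0, A_1)_{\theta, \infty} \hookrightarrow (X_0, X_1)_{\theta', \infty}$. For any decomposition $f = g_0 + g_1$ with $g_j \in A_j$, subadditivity of the $K$-functional together with the pointwise bound $K(g_j, s; X_0, X_1) \leq \norm{g_j}_{A_j}\, s^{\theta_j}$ coming from $g_j \in (X_0, X_1)_{\theta_j, \infty}$ give
\[
K(f, s; X_0, X_1) \leq \norm{g_0}_{A_0}\, s^{\theta_0} + \norm{g_1}_{A_1}\, s^{\theta_1}.
\]
Factoring $s^{\theta_0}$ and taking the infimum over decompositions yields $K(f, s; X_0, X_1) \leq s^{\theta_0}\, K(f, s^{\kappa}; A_0, A_1)$; multiplying by $s^{-\theta'}$ and substituting $t = s^{\kappa}$ then gives the desired quasi-norm inequality.

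The reverse inclusion is the substantive part: for each $t > 0$ I must produce a decomposition $f = a_0 + a_1$ with $a_j \in A_j$ and $\norm{a_0}_{A_0} + t \norm{a_1}_{A_1} \leq c\, t^{\theta} \norm{f}_{(X_0, X_1)_{\theta', \infty}}$, but a single $K$-decomposition of $f$ at one scale will not place its pieces in $A_0$ and $A_1$. My strategy is to invoke the equivalent $J$-method characterisation: every $f \in (X_0, X_1)_{\theta', \infty}$ admits a discrete representation $f = \sum_{n \in \Z} u_n$ (convergent in $X_0 + X_1$) with $u_n \in X_0 \cap X_1$ and $J(u_n, 2^n) := \max(\norm{u_n}_{X_0}, 2^n \norm{u_n}_{X_1}) \leq c\, 2^{n \theta'}\, \norm{f}_{(X_0, X_1)_{\theta', \infty}}$. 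Given $t > 0$, I would pick the integer $N$ with $2^{N\kappa} \sim t$ and set $a_0 := \sum_{n \leq N} u_n$, $a_1 := \sum_{n > N} u_n$; then using $K(u_n, s; X_0, X_1) \leq \min(\norm{u_n}_{X_0}, s \norm{u_n}_{X_1})$ and summing the resulting geometric series (which converge because $0 < \theta' < 1$, a consequence of $0 \leq \theta_0 < \theta_1 \leq 1$ and $0 < \theta < 1$) shows $a_0 \in A_0$, $a_1 \in A_1$ with the required bound.

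The main obstacle will be the $K$-$J$ equivalence invoked in the second step: existence of the weighted discrete $J$-representation is the \emph{fundamental lemma} of interpolation theory, whose proof itself requires a careful approximation of $f$ at dyadic scales followed by a telescoping construction. Rather than reproving it, I would cite this equivalence from \cite{book:BerghLofstrom}, \S3.3, or \cite{book:BennettSharpley}, Chapter~5, \S2, and use it as a black box; modulo this, the proof reduces to the two $K$-functional estimates sketched above.
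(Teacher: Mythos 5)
Your proposal is correct, but it is worth noting that the paper does not prove this theorem at all: its ``proof'' is a pointer to \cite{book:BennettSharpley}, Chapter~5, Theorem~2.4 and \cite{book:BerghLofstrom}, Theorem~3.5.3, the result being standard background rather than a contribution of the paper. What you have written is essentially the Bergh--L\"ofstr\"om route: the easy inclusion by subadditivity of the $K$-functional together with $K(g_j,s;X_0,X_1)\leq \norm{g_j}_{A_j}s^{\theta_j}$ and the rescaling $t=s^{\theta_1-\theta_0}$, and the hard inclusion by splitting a $J$-method representation $f=\sum_n u_n$ at the index $N$ with $2^{N(\theta_1-\theta_0)}\sim t$. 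The exponent bookkeeping works out exactly as you indicate --- the geometric series for $\norm{a_0}_{A_0}$ and $t\norm{a_1}_{A_1}$ both sum to $c\,t^{\theta}\norm{f}_{(X_0,X_1)_{\theta',\infty}}$ precisely because $\theta'-\theta_0=\theta(\theta_1-\theta_0)>0$ and $\theta_1-\theta'=(1-\theta)(\theta_1-\theta_0)>0$ --- and you are right that only $\theta'$ needs to lie strictly in $(0,1)$ (so the endpoint choices $\theta_0=0$, $\theta_1=1$, which the paper actually uses with $\mathfrak{B}=(L^1,\BMO)_{1,\infty}$, cause no trouble). The one residual black box, the fundamental lemma giving the discrete $J$-representation with $J(u_n,2^n)\leq cK(f,2^n)$, is the genuine content, and citing it is reasonable; the alternative, which is the route Bennett and Sharpley take, is Holmstedt's formula, which produces the reverse estimate directly from a single $K$-decomposition of $f$ at scale $s=t^{1/(\theta_1-\theta_0)}$ and avoids the $J$-method entirely, at the cost of a more delicate direct computation. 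Either way your argument is sound and more informative than the paper's bare citation.
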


The proof may be found in \cite{book:BennettSharpley}, Chapter 5, Theorem 2.4, or \cite{book:BerghLofstrom}, Theorem 3.5.3. Using this, we can prove a weak version of our generalised Ladyzhenskaya inequality \eqref{eqn:WeakGN-BMO}.

\begin{lemma}[Weak interpolation]
\label{lem:WeakInterpolation}
For any $f \in L^{q, \infty}(\R^{n}) \cap \BMO(\R^{n})$, and any $q < p < \infty$,
\[
\norm{f}_{L^{p, \infty}} \leq c \norm{f}_{L^{q,\infty}}^{q/p} \norm{f}_{\BMO}^{1-q/p}.
\]
\end{lemma}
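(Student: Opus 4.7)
The plan is to derive the weak interpolation inequality as a direct consequence of the Reiteration Theorem applied to the compatible couple $(L^{1}(\R^{n}), \BMO(\R^{n}))$, combined with the identification $(L^{1}, \BMO)_{1-1/r, \infty} = L^{r, \infty}$ stated earlier in the section.

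First, I would set $X_{0} = L^{1}(\R^{n})$ and $X_{1} = \BMO(\R^{n})$, and introduce the interpolation endpoints
\[
A_{0} := (X_{0}, X_{1})_{\theta_{0}, \infty} = L^{q, \infty}(\R^{n}), \quad \theta_{0} = 1 - \tfrac{1}{q},
\qquad A_{1} := (X_{0}, X_{1})_{1, \infty} = \BMO(\R^{n}), \quad \theta_{1} = 1,
\]
so that the hypothesis $0 \leq \theta_{0} < \theta_{1} \leq 1$ of Theorem~\ref{thm:Reiteration} holds. Next I would solve for the reiteration parameter $\theta \in (0,1)$ such that $\theta' = (1-\theta)\theta_{0} + \theta\theta_{1}$ equals $1 - 1/p$; a short calculation yields $\theta = 1 - q/p$, and hence $1-\theta = q/p$.

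By the Reiteration Theorem we then have
\[
(L^{q,\infty}(\R^{n}), \BMO(\R^{n}))_{1 - q/p, \infty} = (L^{1}(\R^{n}), \BMO(\R^{n}))_{1 - 1/p, \infty} = L^{p, \infty}(\R^{n}),
\]
with equivalent norms. Applying the general interpolation estimate \eqref{eqn:InterpolationEstimate} to the couple $(A_{0}, A_{1}) = (L^{q,\infty}, \BMO)$ with $\theta = 1 - q/p$ then gives
\[
\norm{f}_{L^{p,\infty}} \leq c \norm{f}_{L^{q,\infty}}^{1 - \theta} \norm{f}_{\BMO}^{\theta} = c \norm{f}_{L^{q,\infty}}^{q/p} \norm{f}_{\BMO}^{1 - q/p},
\]
which is the claimed inequality.

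The only subtlety I expect is the use of the endpoint value $\theta_{1} = 1$ in the Reiteration Theorem, together with the identification of $\BMO$ as $(L^{1}, \BMO)_{1, \infty}$. The statement of Theorem~\ref{thm:Reiteration} quoted in the paper explicitly permits $\theta_{1} = 1$, and the required endpoint identification is standard (see \cite{book:BennettSharpley}, Chapter 5), so no real difficulty arises; everything else is just bookkeeping with the reiteration exponent $\theta$.
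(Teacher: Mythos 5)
Your argument is essentially the paper's proof: both identify $L^{q,\infty}$ and $L^{p,\infty}$ as real interpolation spaces of the couple $(L^{1},\BMO)$ via Theorem 8.11 of Bennett--Sharpley, apply the Reiteration Theorem with $\theta_{1}=1$ and $\theta=1-q/p$, and finish with the norm estimate \eqref{eqn:InterpolationEstimate}. The one place where you assert more than you need is the endpoint identification $(L^{1},\BMO)_{1,\infty}=\BMO$: the quoted Theorem 8.11 only covers $1<q<\infty$, and in general $(X_{0},X_{1})_{1,\infty}$ is the Gagliardo completion of $X_{1}$, so equality with $\BMO$ is not free. The paper sidesteps this by naming $\mathfrak{B}:=(L^{1},\BMO)_{1,\infty}$, running the reiteration with the couple $(L^{q,\infty},\mathfrak{B})$, and then using only the one-sided bound $\norm{f}_{\mathfrak{B}}\leq C\norm{f}_{\BMO}$ (which is immediate from \eqref{eqn:InterpolationEstimate} with $\theta=1$); you should do the same, since that containment is all your final chain of inequalities actually uses.
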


\begin{proof}
By \cite{book:BennettSharpley}, Chapter 5, Theorem 8.11, we have
\[
L^{q,\infty}(\R^{n}) = (L^{1}(\R^{n}), \BMO(\R^{n}))_{1 - 1/q, \infty}
\]
provided that $1 < q < \infty$. Set $\mathfrak{B} := (L^{1}(\R^{n}), \BMO(\R^{n}))_{1, \infty}$, and note that by \eqref{eqn:InterpolationEstimate} we have $\norm{f}_{\mathfrak{B}} \leq C \norm{f}_{\BMO}$. By the Reiteration Theorem (Theorem~\ref{thm:Reiteration}), we obtain
\[
L^{p,\infty}(\R^{n}) = (L^{q,\infty}(\R^{n}), \mathfrak{B})_{\alpha, \infty}
\]
with $q < p < \infty$, provided that $\alpha$ solves $1 - \frac{1}{p} = (1-\alpha) (1 - \frac{1}{q}) + \alpha \cdot 1$, or in other words that $\alpha = 1 - q/p$. Thus, using~\eqref{eqn:InterpolationEstimate}, we obtain
\[
\norm{f}_{L^{p,\infty}} \leq c \norm{f}_{L^{q,\infty}}^{q/p} \norm{f}_{\mathfrak{B}}^{1-q/p} \leq c \norm{f}_{L^{q,\infty}}^{q/p} \norm{f}_{\BMO}^{1-q/p},
\]
as required.
\end{proof}

By combining this with Lemma~\ref{lemma:WeakStrongInterp}, we obtain our generalised Ladyzhenskaya inequality \eqref{eqn:WeakGN-BMO}.

\begin{lemma}[Strong interpolation]
\label{lem:StrongInterpolation}
For any $f \in L^{q, \infty}(\R^{n}) \cap \BMO(\R^{n})$, and any $q < p < \infty$,
\[
\norm{f}_{L^{p}} \leq c \norm{f}_{L^{q,\infty}}^{q/p} \norm{f}_{\BMO}^{1-q/p}.
\]
\end{lemma}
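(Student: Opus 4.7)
The plan is to combine the two interpolation results that are already in hand. Lemma~\ref{lem:WeakInterpolation} gives us the right-hand-side norms we want on $f$, but only controls $\norm{f}_{L^{p,\infty}}$ rather than $\norm{f}_{L^p}$; and Lemma~\ref{lemma:WeakStrongInterp} is precisely the tool for passing from weak $L^{p}$ bounds at two different exponents to a strong $L^{p}$ bound at an intermediate one. So the strategy is to squeeze our target $p$ between $q$ and some auxiliary $r > p$, use weak--strong interpolation, and then handle the $L^{r,\infty}$ factor via the already-established weak interpolation.

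Concretely, fix $q < p < \infty$ and choose any $r$ with $p < r < \infty$. The first step is to apply Lemma~\ref{lemma:WeakStrongInterp} with this triple, yielding
\[
\norm{f}_{L^{p}} \leq c \, \norm{f}_{L^{q,\infty}}^{1-\alpha} \norm{f}_{L^{r,\infty}}^{\alpha},
\qquad \frac{1-\alpha}{q} + \frac{\alpha}{r} = \frac{1}{p}.
\]
The second step is to apply Lemma~\ref{lem:WeakInterpolation} (with $r$ in place of $p$) to bound the weak $L^{r}$ factor:
\[
\norm{f}_{L^{r,\infty}} \leq c \, \norm{f}_{L^{q,\infty}}^{q/r} \norm{f}_{\BMO}^{1-q/r}.
\]
Substituting into the previous inequality collapses everything onto the two norms we want:
\[
\norm{f}_{L^{p}} \leq c \, \norm{f}_{L^{q,\infty}}^{(1-\alpha)+\alpha q/r} \norm{f}_{\BMO}^{\alpha(1-q/r)}.
\]

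The final step is a routine check that the exponents match the target. Solving $\frac{1-\alpha}{q} + \frac{\alpha}{r} = \frac{1}{p}$ gives $\alpha = \frac{1-q/p}{1-q/r}$, so that $\alpha(1 - q/r) = 1 - q/p$; the total exponent being $1$, the $L^{q,\infty}$ exponent is then automatically $q/p$. Since the two interpolation inputs are already established, there is no real obstacle here beyond the arithmetic of the exponents. The only sanity check worth flagging is the hypothesis $1 < q$ required by Lemma~\ref{lemma:WeakStrongInterp} and by the identification of $L^{q,\infty}$ as an interpolation space of $(L^1, \BMO)$ used in Lemma~\ref{lem:WeakInterpolation}, which is automatically in force in all the applications made later in the paper.
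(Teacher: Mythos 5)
Your proof is correct and follows essentially the same route as the paper: combine the weak--strong interpolation of Lemma~\ref{lemma:WeakStrongInterp} with the weak interpolation of Lemma~\ref{lem:WeakInterpolation}, then check the exponent arithmetic. The only (harmless) difference is that the paper sandwiches $p$ between two auxiliary exponents $q<r<p<s$ and applies Lemma~\ref{lem:WeakInterpolation} to both weak factors, whereas you take $q$ itself as the lower endpoint and need only one application to the $L^{r,\infty}$ factor --- a slight streamlining that changes nothing of substance, and your remark about needing $q>1$ applies equally to both versions.
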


\begin{proof}
Given $p > q$, choose any $r$ and $s$ such that $q < r < p < s < \infty$. Then
\[
\norm{f}_{L^{p}} \leq c \norm{f}_{L^{r,\infty}}^{1-\alpha} \norm{f}_{L^{s,\infty}}^{\alpha},
\]
where $\alpha$ satisfies $\frac{1 - \alpha}{r} + \frac{\alpha}{s} = \frac{1}{p}$. Applying Lemma~\ref{lem:WeakInterpolation} to each of the two factors on the right, we obtain
\begin{align*}
\norm{f}_{L^{p}} &\leq c (c \norm{f}_{L^{q,\infty}}^{q/r} \norm{f}_{\BMO}^{1-q/r})^{1-\alpha} (c \norm{f}_{L^{q,\infty}}^{q/s} \norm{f}_{\BMO}^{1-q/s})^{\alpha} \\
&\leq c \norm{f}_{L^{q,\infty}}^{(1-\alpha)q/r + \alpha q/s} \norm{f}_{\BMO}^{(1-\alpha)(1-q/r) + \alpha(1-q/s)} \\
&= c \norm{f}_{L^{q,\infty}}^{q/p} \norm{f}_{\BMO}^{1-q/p},
\end{align*}
as required.
\end{proof}

It follows from Poincar\'{e}'s inequality that $W^{1,n} \subset \BMO$ in $n \geq 2$ dimensions (see \S5.8.1 in \cite{book:Evans}): so, in particular, in two dimensions $\dot{H}^{1} \subset \BMO$. (One may also prove that $H^{n/2} \subset \BMO$ in $n$ dimensions: see Theorem~1.48 in \cite{book:BCD2011}.) Thus for $f \in L^{2,\infty}(\R^{2}) \cap \dot{H}^{1}(\R^{2})$, setting $n=2$, $p=4$ and $q=2$ in Lemma~\ref{lem:StrongInterpolation} we obtain \eqref{eqn:WeakLadyzhenskaya}:
\[
\norm{f}_{L^{4}} \leq c \norm{f}_{L^{2,\infty}}^{1/2} \norm{\Grad f}_{L^{2}}^{1/2}.
\]
When $\Omega$ is a bounded Lipschitz domain in $\R^{2}$, we may extend a function $f \in H^{1}_{0}(\Omega)$ by zero outside $\Omega$ and apply the above inequality on $\R^{2}$ to obtain the same for $\Omega$.

When $\Omega = [0,1]^{2}$ with periodic boundary conditions, however, a different argument using Fourier series is required: we obtain the same inequality using the Sobolev embedding $L^{4} \subset \dot{H}^{1/2}$ and the fact that
\begin{equation}
\label{eqn:Bernstein}
f = \sum_{|k| \leq \kappa} \hat{f}_k e^{2 \pi i k \cdot x} \qquad \implies \qquad \norm{f}_{L^{4}} \leq c \kappa^{1/2} \norm{f}_{L^{2,\infty}}
\end{equation}
(a weak form of Bernstein's inequality; see \cite{book:MuscaluSchlag1,book:MuscaluSchlag2} for the relevant theory of Fourier series and McCormick~et~al.~\cite{art:MJM} for the proof). Indeed, writing
\[
f = \sum_{|k| \leq \kappa} \hat{f}_k e^{2 \pi i k \cdot x} + \sum_{|k|>\kappa} \hat{f}_k e^{2 \pi i k \cdot x}
\]
we obtain, using \eqref{eqn:Bernstein} and $L^{4} \subset \dot{H}^{1/2}$,
\begin{align*}
\|f\|_{L^{4}} &\leq c \kappa^{1/2} \norm{f}_{L^{2,\infty}} + c \left(\sum_{|k|>\kappa} |k| |f_k|^2 \right)^{1/2}\\
&\leq c \kappa^{1/2} \norm{f}_{L^{2,\infty}} + c \kappa^{-1/2} \left( \sum_{|k|>\kappa} |k|^2 |f_k|^2 \right)^{1/2} \\
&\leq c \kappa^{1/2} \norm{f}_{L^{2,\infty}} + c \kappa^{-1/2} \norm{\Grad f}_{L^{2}}.
\end{align*}
Minimising over $\kappa$ we obtain \eqref{eqn:WeakLadyzhenskaya}. A similar argument involving Fourier transforms can be used to obtain a more general version of \eqref{eqn:WeakLadyzhenskaya} and \eqref{eqn:WeakGN-BMO} on the whole space; see McCormick~et~al.~\cite{art:MJM}.

\section{The Stokes operator and elliptic regularity in $L^{1}$}
\label{sec:StokesElliptic}

We now consider the Stokes equation alone. Take $\ff \colon \Omega \to \R^{2 \times 2}$, and define $\Div \ff \colon \Omega \to \R^{2}$ componentwise as follows:
\[
(\Div \ff)_{j} = \sum_{i=1}^{2} \pd_{i} \ff_{i,j}.
\]
With abuse of notation, we write $L^{p}(\Omega) = L^{p}(\Omega; \R^{2 \times 2})$ where no confusion can arise. We consider the equations
\begin{subequations}
\label{eqn:Stokes}
\begin{align}
- \nu \Laplace \uu + \Grad p &= \Div \ff, \\
\Div \uu &= 0,
\end{align}
\end{subequations}
with Dirichlet boundary conditions if $\Omega$ is bounded. By setting $\ff = \BB \otimes \BB$ (i.e.~$\ff_{i,j} = \BB_{i} \BB_{j}$) we recover equation~\eqref{eqn:StokesMHD-u}:
\[
(\Div (\BB \otimes \BB))_{j} = \sum_{i=1}^{2} \pd_{i} (\BB_{i} \BB_{j}) = \sum_{i=1}^{2} \BB_{i} \pd_{i} \BB_{j} = [(\BB \cdot \Grad) \BB]_{j},
\]
as $\BB$ is divergence-free. In this case, if $\BB \in L^{2}(\Omega)$, then $\BB \otimes \BB$ is in $L^{1}(\Omega)$, so the right-hand side behaves like the derivative of an $L^{1}$ function. If $\ff \in L^{p}(\Omega)$ for $p > 1$, one would expect that $\uu \in W^{1,p}(\Omega)$, but this does not hold for $p=1$. If it did, in two dimensions we would obtain $\uu \in W^{1,1}(\Omega) \subset L^{2}(\Omega)$. In fact, in this section we prove that, when $\ff \in L^{1}(\Omega)$ in \eqref{eqn:Stokes}, then $\uu \in L^{2,\infty}(\Omega)$.

The solution of this equation is given by convolution with the \emph{Green's function} or \emph{fundamental solution} of the equation: let $\UU$, $q$ solve
\begin{align*}
-\nu \Laplace \UU + \Grad q &= \delta, \\
\Div \UU &= 0,
\end{align*}
where $\delta$ denotes the Dirac delta function. Then the solution of \eqref{eqn:Stokes} is given by
\[
\uu = \UU \ast (\Div \ff), \qquad p = q \ast (\Div \ff).
\]
Integrating by parts, if $\pd_{k} \UU \in L^{2,\infty}(\Omega)$, then by Young's inequality (Theorem~\ref{thm:Young}) we can show that $\norm{\uu}_{L^{2,\infty}} \leq C \norm{\ff}_{L^{1}}$.

In the case $\Omega = \R^{2}$, we have explicit formulae for the fundamental solution $\UU$ and $q$:
\begin{align*}
\UU_{i,j}(x) &= \frac{1}{4\pi\nu} \left[ \frac{x_{i}x_{j}}{|x|^{2}} - \delta_{ij} \log |x| \right], \\
q_{j}(x) &= \frac{1}{2\pi} \frac{x_{j}}{|x|^{2}}.
\end{align*}
(A derivation of the fundamental solution may be found in \S{}IV.2 of \cite{book:Galdi}.) Integrating by parts with respect to $k$, we obtain
\begin{align*}
\uu_{i}(x) &= [\UU \ast (\Div \ff)]_{i}(x) \\
&= \sum_{j = 1}^{2} \int_{\R^{2}} \UU_{i,j} (x-y) \sum_{k=1}^{2} \pd_{k} f_{k,j}(y) \, \rd y \\
&= - \sum_{j, k = 1}^{2} \int_{\R^{2}}  \pd_{k} \UU_{i,j} (x-y) f_{k,j}(y) \, \rd y.
\end{align*}
Now,
\[
\pd_{k} \UU_{i,j} (x) = \frac{1}{4\pi\nu} \left[ \frac{\delta_{ik} x_{j} + \delta_{kj} x_{i}}{|x|^{2}} - \frac{x_{i} x_{j} x_{k}}{|x|^{4}} - \delta_{ij} \frac{x_{k}}{|x|^{2}} \right], 
\]
and so
\[
\abs{\pd_{k} \UU_{i,j} (x)} \leq \frac{1}{\pi\nu|x|}.
\]
As noted in Section~\ref{sec:WeakLp}, $\frac{1}{|x|}$ is in $L^{2,\infty}(\R^{2})$, and $\norm{\pd_{k} \UU_{i,j}}_{L^{2,\infty}} \leq \frac{1}{\nu \sqrt{\pi}}$. Using Young's inequality (Theorem~\ref{thm:Young}), we obtain
\begin{equation}
\label{eqn:StokesEstimate}
\norm{\uu}_{L^{2, \infty}} \leq c \norm{\pd_{k} \UU_{i,j}}_{L^{2, \infty}} \norm{\ff}_{L^{1}} \leq c \norm{\ff}_{L^{1}}.
\end{equation}
Thus, whenever $\ff \in L^{1}(\R^{2})$, $\uu \in L^{2,\infty}(\R^{2})$.

In the case where $\Omega = [0,1]^{2}$ with periodic boundary conditions, one can also write down an explicit formula for the fundamental solution --- see \cite{art:Hasimoto1959} and \cite{art:CichockiFelderhof1989}, for example --- and obtain \eqref{eqn:StokesEstimate} again; the details are very similar to the above case, and we omit them.

In the case where $\Omega$ is a bounded Lipschitz domain, while we no longer have an explicit formula for the Green's function $\UU$, by Theorem~7.1 in \cite{art:MitreaMitrea2011} we have $\Grad \UU \in L^{2,\infty}(\Omega)$ whenever $\Omega \subset \R^{2}$ is a bounded Lipschitz domain, thus using Young's inequality (Theorem~\ref{thm:Young}), we obtain \eqref{eqn:StokesEstimate} on a bounded Lipschitz domain as well; i.e., whenever $\ff \in L^{1}(\Omega)$, $\uu \in L^{2,\infty}(\Omega)$.

\section{Existence and uniqueness of weak solutions}
\label{sec:ExistenceUniqueness}

We return now to the system
\begin{subequations}
\label{eqn:StokesMHD-Rep}
\begin{align}
- \nu \Laplace \uu + \Grad p_{*} &= (\BB \cdot \Grad) \BB, \label{eqn:StokesMHD-Rep-u} \\
\frac{\pd \BB}{\pd t} + (\uu \cdot \Grad) \BB - \eta \Laplace \BB &= (\BB \cdot \Grad) \uu, \label{eqn:StokesMHD-Rep-B} \\
\Div \uu = \Div \BB &= 0,
\end{align}
\end{subequations}
where $p_{*} = p + \frac{1}{2} \abs{\BB}^{2}$. We will show that equations~\eqref{eqn:StokesMHD-Rep} have a unique weak solution for all time in the three cases of $\Omega$ described in Theorem~\ref{thm:MainTheorem}. We first define a weak solution.

\begin{definition}
A \emph{weak solution} of \eqref{eqn:StokesMHD-Rep} on $[0,T]$ is a pair of functions $(\uu, \BB)$ such that
\[
\uu \in L^{\infty}(0,T; L^{2,\infty}(\Omega)) \cap L^{2}(0,T; H^{1}(\Omega))
\]
and
\[
\BB \in L^{\infty}(0,T; L^{2}(\Omega)) \cap L^{2}(0,T; H^{1}(\Omega))
\]
satisfying \eqref{eqn:StokesMHD-Rep} as an equality in $L^{2}(0, T; H^{-1}(\Omega))$.
\end{definition}

Note that the pressure $p$ is uniquely determined by $\uu$ and $\BB$ by solution of a standard elliptic boundary value problem; see \cite{book:CDGG} or \cite{book:FMRT}. We will prove the following theorem.

\begin{theorem}
\label{thm:WeakSolution}
Given $\BB_{0} \in L^{2}(\Omega)$ with $\Div \BB_{0} = 0$, for any $T > 0$ there exists a unique weak solution $(\uu, \BB)$ of \eqref{eqn:StokesMHD-Rep}.
\end{theorem}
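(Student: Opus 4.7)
The plan is to prove existence by a Galerkin approximation and uniqueness by an energy estimate on the difference of two solutions, with the role of the new tools from Sections~\ref{sec:Interpolation} and \ref{sec:StokesElliptic} being to close the estimates when $\uu$ is only bounded in $L^{2,\infty}(\Omega)$. A clean formal a priori estimate comes first: testing \eqref{eqn:StokesMHD-Rep-B} against $\BB$ and \eqref{eqn:StokesMHD-Rep-u} against $\uu$, the incompressibility of $\uu$ kills the transport term $\int (\uu \cdot \Grad) \BB \cdot \BB$, while $\Div \BB = 0$ gives the cancellation $\int (\BB \cdot \Grad) \BB \cdot \uu + \int (\BB \cdot \Grad) \uu \cdot \BB = 0$, leaving
\[
\tfrac{1}{2} \tfrac{\rd}{\rd t} \norm{\BB}_{L^2}^2 + \eta \norm{\Grad \BB}_{L^2}^2 + \nu \norm{\Grad \uu}_{L^2}^2 = 0.
\]
Hence $\BB \in L^\infty(0,T; L^2) \cap L^2(0,T; H^1)$ and $\uu \in L^2(0,T; H^1)$, and \eqref{eqn:StokesEstimate} applied with $\ff = \BB \otimes \BB \in L^1$ yields $\uu \in L^\infty(0,T; L^{2,\infty})$ with $\norm{\uu}_{L^{2,\infty}} \leq c \norm{\BB}_{L^2}^2$.

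For existence I would use a standard Galerkin scheme in an orthonormal basis $\{w_j\}$ of divergence-free functions, writing $\BB^n(t) = \sum_{j \leq n} b^n_j(t) w_j$ and letting $\uu^n$ be determined by the Stokes problem with data $(\BB^n \cdot \Grad)\BB^n$. Testing with $\BB^n$ reproduces the energy identity, giving uniform bounds and global existence for the ODE for $(b^n_j)$. To apply Aubin--Lions I also need $\pd_t \BB^n \in L^2(0,T; H^{-1})$, and this is where the interpolation inequalities enter: the classical Ladyzhenskaya inequality \eqref{eqn:Ladyzhenskaya} bounds $\BB^n$ in $L^4(0,T; L^4)$, while the weak Ladyzhenskaya inequality \eqref{eqn:WeakLadyzhenskaya} does the same for $\uu^n$, so $\uu^n \otimes \BB^n$ and $\BB^n \otimes \uu^n$ are bounded in $L^2(0,T; L^2)$, delivering the required $H^{-1}$ bound on $\pd_t \BB^n$. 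Aubin--Lions then gives strong convergence of a subsequence of $\BB^n$ in $L^2(0,T; L^2)$, and hence in $L^4(0,T; L^4)$ by interpolation with the $L^2 H^1$ bound; continuity of the Stokes solution operator transfers convergence to $\uu^n$, and one passes to the limit in the weak formulation.

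For uniqueness, given two solutions $(\uu_i, \BB_i)$ sharing $\BB_0$, set $\uu := \uu_1 - \uu_2$ and $\BB := \BB_1 - \BB_2$. Writing $\BB_1 \otimes \BB_1 - \BB_2 \otimes \BB_2 = \BB_1 \otimes \BB + \BB \otimes \BB_2$, the difference $\uu$ solves a Stokes problem with $L^1$ data, and \eqref{eqn:StokesEstimate} gives
\[
\norm{\uu}_{L^{2,\infty}} \leq c \bigl(\norm{\BB_1}_{L^2} + \norm{\BB_2}_{L^2}\bigr) \norm{\BB}_{L^2} \leq c \norm{\BB_0}_{L^2} \norm{\BB}_{L^2}.
\]
Testing the $\BB$-difference equation against $\BB$ and the $\uu$-difference equation against $\uu$ and cancelling the same pair of cross-terms as before leaves three trilinear residuals, each of which I would control by H\"older together with the two Ladyzhenskaya inequalities: for example,
\[
\Bigl|\int (\uu \cdot \Grad) \BB_2 \cdot \BB\Bigr| \leq \norm{\uu}_{L^{2,\infty}}^{1/2} \norm{\Grad \uu}_{L^2}^{1/2} \norm{\BB}_{L^2}^{1/2} \norm{\Grad \BB}_{L^2}^{1/2} \norm{\Grad \BB_2}_{L^2},
\]
after which the Stokes estimate for $\norm{\uu}_{L^{2,\infty}}$ produces an extra factor of $\norm{\BB}_{L^2}^{1/2}$, and Young absorbs the gradient factors into the dissipation. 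The result is a differential inequality
\[
\tfrac{\rd}{\rd t} \norm{\BB}_{L^2}^2 \leq c\bigl(1 + \norm{\Grad \BB_1}_{L^2}^2 + \norm{\Grad \BB_2}_{L^2}^2 + \norm{\Grad \uu_2}_{L^2}^2\bigr) \norm{\BB}_{L^2}^2
\]
with coefficient in $L^1(0,T)$ by the energy estimate, so Gronwall forces $\BB \equiv 0$ and $\uu \equiv 0$. The main obstacle is precisely this uniqueness step: because $\uu$ lives only in $L^{2,\infty}$ in space, the classical $L^2$-based Ladyzhenskaya argument is unavailable, and closure of the estimate genuinely requires both ingredients developed in the previous two sections acting together.
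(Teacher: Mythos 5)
Your proposal follows essentially the same route as the paper: Galerkin approximation, the cancellation-based energy identity, an $H^{-1}$ bound on $\pd_t\BB^m$ via the two Ladyzhenskaya inequalities, Aubin--Lions, and a Gr\"onwall argument for uniqueness driven by the $L^1\to L^{2,\infty}$ Stokes estimate and the weak Ladyzhenskaya inequality. Your uniqueness computation is a mild variant of the paper's: you keep $\nu\norm{\Grad\ww}_{L^2}^2$ on the left-hand side of the summed energy identity and absorb the half-powers of $\norm{\Grad\ww}_{L^2}$ and $\norm{\Grad\zz}_{L^2}$ by Young's inequality, whereas the paper first extracts the bounds $\norm{\Grad\ww}_{L^2}\leq c\norm{\zz}_{L^2}^{1/2}\norm{\Grad\zz}_{L^2}^{1/2}(\cdots)$ and $\norm{\ww}_{L^4}\leq c\norm{\zz}_{L^2}^{3/4}\norm{\Grad\zz}_{L^2}^{1/4}(\cdots)$ from the elliptic equation and substitutes them into the $\BB$-estimate alone. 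Both close, with the same integrable Gr\"onwall coefficient.

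The one genuine gap is the case $\Omega=\R^2$, which is part of the statement. There the Galerkin/Aubin--Lions scheme as you describe it does not apply: there is no eigenbasis of the Stokes operator with discrete spectrum, and more importantly the embedding $H^1(\R^2)\subset L^2(\R^2)$ is not compact, so Aubin--Lions does not yield strong $L^2(0,T;L^2(\R^2))$ convergence of $\BB^m$. The paper instead regularises by mollification, obtains strong convergence only in $L^2(0,T;L^2(K))$ for compact $K\subset\R^2$, and passes to the limit in the nonlinear terms using this local compactness together with the uniform global bounds. You would need to add this (or an equivalent device such as an invading-domains argument) to cover the whole-space case; the periodic case, by contrast, goes through exactly as you wrote it. Two smaller points: the interpolation of the strong $L^2L^2$ convergence with the $L^2H^1$ bound gives convergence in $L^2(0,T;L^4)$ rather than $L^4(0,T;L^4)$, which is still sufficient to pass to the limit in the trilinear terms; and in the uniqueness step one should note, as the paper does, that $\zz\in L^2(0,T;H^1)$ with $\pd_t\zz\in L^2(0,T;H^{-1})$ justifies using $\zz$ itself as a test function.
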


In Section~\ref{sec:BddDomain}, we will prove existence of a weak solution in the case $\Omega \subset \R^{2}$ is a Lipschitz bounded domain with Dirichlet boundary conditions, while in Section~\ref{sec:R2} we prove existence of a weak solution in the case $\Omega = \R^{2}$. The proof of existence in the case where $\Omega = [0,1]^{2}$ with periodic boundary conditions is analogous to the previous two, and we omit it. Finally, in Section~\ref{sec:Uniqueness}, we prove uniqueness of weak solutions.

\subsection{Global existence of solutions in a bounded domain}
\label{sec:BddDomain}

In this subsection we prove existence of a weak solution on a Lipschitz bounded domain $\Omega \subset \R^{2}$, with Dirichlet boundary conditions, using the method of Galerkin approximations. To do so, we first set up some notation. Let $H := \{ \uu \in L^{2}(\Omega) : \Div \uu = 0 \}$, and let $\Pi$ be the Leray projection $\Pi \colon L^{2}(\Omega) \to H$, i.e.~the orthogonal projection from $L^{2}$ onto $H$.

We define the \emph{Stokes operator} as $A := - \Pi \Laplace$.  Let $\{ \phi_{m} \}_{m \in \N} \subset C^{\infty}(\Omega)$ be the collection of eigenfunctions of the Stokes operator on $\Omega$ with Dirichlet boundary conditions, ordered such that the eigenvalues associated to $\phi_{m}$ are non-decreasing with respect to $m$. Let $V_{m}$ be the subspace of $H$ spanned by $\phi_{1}, \dots, \phi_{m}$, and let $P_{m} \colon H \to V_{m}$ be the orthogonal projection on to $V_{m}$.

In order to use the Galerkin method, we consider the equations
\begin{subequations}
\label{eqn:StokesMHD-Galerkin}
\begin{align}
- \nu \Laplace \uu^{m} + \Grad p^{m}_{*} &= (\BB^{m} \cdot \Grad) \BB^{m}, \label{eqn:StokesMHD-Galerkin-u} \\
\frac{\pd \BB^{m}}{\pd t} + P_{m} [(\uu^{m} \cdot \Grad) \BB^{m}] - \eta \Laplace \BB^{m} &= P_{m} [(\BB^{m} \cdot \Grad) \uu^{m}], \label{eqn:StokesMHD-Galerkin-B} \\
\Div \uu^{m} = \Div \BB^{m} &= 0.
\end{align}
\end{subequations}
Thinking of $\uu^{m}$ as a function of $\BB^{m}$ given by equation~\eqref{eqn:StokesMHD-Galerkin-u}, it is easy to check that \eqref{eqn:StokesMHD-Galerkin-B} is a locally Lipschitz ODE on the finite-dimensional space $V_{m}$, and thus by existence and uniqueness theory for finite-dimensional ODEs (Picard's theorem), there exists a unique solution $\BB^{m} \in V_{m}$ of equation~\eqref{eqn:StokesMHD-Galerkin-B}, with $\uu^{m}$ given by equation~\eqref{eqn:StokesMHD-Galerkin-u}.

\begin{proposition}[Energy estimates]
\label{prop:BddEnergyEstimates}
The Galerkin approximations are uniformly bounded in the following senses:
\begin{align*}
\uu^{m} &\in L^{\infty}(0, T; L^{2, \infty}(\Omega)) \cap L^{2}(0,T; H^{1}(\Omega));\\
\BB^{m} &\in L^{\infty}(0,T; L^{2}(\Omega)) \cap L^{2}(0,T; H^{1}(\Omega)).
\end{align*}
\end{proposition}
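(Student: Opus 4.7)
My plan is to derive a single energy identity by testing each of the two Galerkin equations with the natural test function, combining them, and then using the elliptic estimate from Section~\ref{sec:StokesElliptic} to upgrade the resulting bound on $\uu^m$ to the weak-$L^{2}$ space. Since $\BB^m \in V_m$ and $P_m$ is an orthogonal projection on $H$, pairing \eqref{eqn:StokesMHD-Galerkin-B} with $\BB^m$ in $L^2$ removes the projections, so I first obtain
\begin{equation*}
\frac{1}{2}\frac{\rd}{\rd t}\norm{\BB^m}_{L^2}^2 + \eta \norm{\Grad \BB^m}_{L^2}^2 + \inner{(\uu^m \cdot \Grad)\BB^m}{\BB^m} = \inner{(\BB^m \cdot \Grad)\uu^m}{\BB^m}.
\end{equation*}
Because $\Div \uu^m = \Div \BB^m = 0$, the standard integration-by-parts identities give $\inner{(\uu^m \cdot \Grad)\BB^m}{\BB^m} = 0$ and $\inner{(\BB^m \cdot \Grad)\uu^m}{\BB^m} = -\inner{(\BB^m \cdot \Grad)\BB^m}{\uu^m}$.

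Next I test the Stokes equation \eqref{eqn:StokesMHD-Galerkin-u} with $\uu^m$ itself (no projection appears in that equation); since $\uu^m$ is divergence-free and vanishes on the boundary (or is periodic, or decays at infinity in the corresponding cases), the pressure term drops and I obtain $\nu \norm{\Grad \uu^m}_{L^2}^2 = \inner{(\BB^m \cdot \Grad)\BB^m}{\uu^m}$. Adding this to the previous identity produces exactly the cancellation
\begin{equation*}
\frac{1}{2}\frac{\rd}{\rd t}\norm{\BB^m}_{L^2}^2 + \eta \norm{\Grad \BB^m}_{L^2}^2 + \nu \norm{\Grad \uu^m}_{L^2}^2 = 0.
\end{equation*}
Integrating in time and using $\norm{\BB^m(0)}_{L^2} \leq \norm{\BB_0}_{L^2}$ yields the uniform bounds $\BB^m \in L^{\infty}(0,T;L^2(\Omega))$ and $\Grad \BB^m, \Grad \uu^m \in L^2(0,T;L^2(\Omega))$. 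Poincar\'e's inequality (valid in the bounded Dirichlet case) then upgrades the $\Grad$-bounds to $L^2(0,T;H^1(\Omega))$ bounds on both $\uu^m$ and $\BB^m$.

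Finally, for the $L^{\infty}(0,T;L^{2,\infty})$ bound on $\uu^m$, I invoke the elliptic regularity result of Section~\ref{sec:StokesElliptic}. Writing $(\BB^m \cdot \Grad)\BB^m = \Div(\BB^m \otimes \BB^m)$ puts \eqref{eqn:StokesMHD-Galerkin-u} in the form \eqref{eqn:Stokes} with $\ff = \BB^m \otimes \BB^m$, and since $\norm{\BB^m \otimes \BB^m}_{L^1} \leq \norm{\BB^m}_{L^2}^2$, estimate \eqref{eqn:StokesEstimate} gives $\norm{\uu^m(t)}_{L^{2,\infty}} \leq c \norm{\BB^m(t)}_{L^2}^2 \leq c \norm{\BB_0}_{L^2}^2$ uniformly in $t \in [0,T]$ and $m$.

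I expect the main subtlety to be bookkeeping rather than hard analysis: one has to justify that $\uu^m$, defined via an unprojected Stokes solve, is regular enough to be used as a test function in \eqref{eqn:StokesMHD-Galerkin-u} (which follows from Section~\ref{sec:StokesElliptic} together with standard $W^{1,p}$ Stokes regularity for $\BB^m \in V_m \subset C^\infty$), and that the presence of $P_m$ in \eqref{eqn:StokesMHD-Galerkin-B} does not obstruct the cancellation --- which it does not, by self-adjointness of $P_m$ and the fact that $\BB^m \in V_m$. Everything else is the standard MHD-type energy balance, with the only genuinely non-standard ingredient being the $L^1 \to L^{2,\infty}$ Stokes estimate that is needed to capture $\uu^m$ in a space consistent with the regularity of $(\BB^m \cdot \Grad)\BB^m$.
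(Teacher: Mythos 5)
Your proof is correct and follows essentially the same route as the paper: test the two Galerkin equations with $\uu^m$ and $\BB^m$, add to exploit the cancellation of the $\inner{(\BB^m\cdot\Grad)\BB^m}{\uu^m}$ terms, integrate the resulting energy identity, and then invoke the $L^1 \to L^{2,\infty}$ Stokes estimate \eqref{eqn:StokesEstimate} to obtain the $L^\infty(0,T;L^{2,\infty})$ bound on $\uu^m$. You simply spell out the cancellation and the role of $P_m$ more explicitly than the paper does.
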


\begin{proof}
Take the inner product of equation~\eqref{eqn:StokesMHD-Galerkin-u} with $\uu^{m}$ and the inner product of equation~\eqref{eqn:StokesMHD-Galerkin-B} with $\BB^{m}$, and add to obtain
\[
\frac{1}{2} \frac{\rd}{\rd t} \norm{\BB^{m}(t)}_{L^{2}}^{2} + \nu \norm{\Grad \uu^{m}(t)}_{L^{2}}^{2} + \eta \norm{\Grad \BB^{m}(t)}_{L^{2}}^{2} = 0.
\]
Integrating over $[0,t]$ we obtain
\begin{align*}
&\norm{\BB^{m}(t)}_{L^{2}}^{2}  + 2\nu \int_{0}^{t} \norm{\Grad \uu^{m}(s)}_{L^{2}}^{2} \, \rd s + 2\eta \int_{0}^{t} \norm{\Grad \BB^{m}(s)}_{L^{2}}^{2} \, \rd s \\
&\qquad \qquad \qquad = \norm{\BB^{m}(0)}_{L^{2}}^{2} \leq \norm{\BB_{0}}_{L^{2}}^{2},
\end{align*}
and taking the essential supremum over all $t \in [0,T]$, it follows that
\begin{align*}
&\esssup_{t \in [0,T]} \norm{\BB^{m}(t)}_{L^{2}}^{2} + 2 \nu \int_{0}^{T} \norm{\Grad \uu^{m}(s)}_{L^{2}}^{2} \, \rd s + 2 \eta \int_{0}^{T} \norm{\Grad \BB^{m}(s)}_{L^{2}}^{2} \, \rd s \\
&\qquad \qquad \qquad \leq 2\norm{\BB_{0}}_{L^{2}}^{2},
\end{align*}
which proves that
\[
\uu^{m} \in L^{2}(0,T; H^{1}(\Omega)); \qquad \BB^{m} \in L^{\infty}(0,T; L^{2}(\Omega)) \cap L^{2}(0,T; H^{1}(\Omega)).
\]

As in Section~\ref{sec:StokesElliptic}, the solution $\uu^{m}$ to equation~\eqref{eqn:StokesMHD-Galerkin-u} is given by convolution with $\UU$, the Green's function for the Stokes equations. By \eqref{eqn:StokesEstimate}, we have
\[
\norm{\uu^{m}(t)}_{L^{2, \infty}} \leq c \norm{(\BB^{m}(t))^{2}}_{L^{1}} \leq c \norm{\BB^{m}(t)}_{L^{2}}^{2},
\]
so taking the essential supremum over $t \in (0, T)$ tells us that
\[
\uu^{m} \in L^{\infty}(0, T; L^{2, \infty}(\Omega)),
\]
which completes the proof.
\end{proof}

\begin{proposition}
\label{prop:Bdd-dBdtEstimate}
The Galerkin approximations are uniformly bounded as follows:
\[
\frac{\pd \BB^{m}}{\pd t} \in L^{2}(0, T; H^{-1}(\Omega)).
\]
\end{proposition}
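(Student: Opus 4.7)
The plan is to test the Galerkin equation \eqref{eqn:StokesMHD-Galerkin-B} against an arbitrary $\varphi \in H^{1}_{0}(\Omega)$ with $\norm{\Grad \varphi}_{L^{2}} \leq 1$ and to estimate each of the three resulting terms uniformly in $m$. Since $P_{m}$ is the $L^{2}$-orthogonal projection onto $V_{m}$, is self-adjoint, and --- because the Stokes eigenfunctions $\{\phi_{k}\}$ are simultaneously orthogonal in $L^{2}$ and in $H^{1}_{0}$ --- has operator norm one on $V := H^{1}_{0}(\Omega) \cap H$, I may shift $P_{m}$ onto $\varphi$ wherever it appears, at no cost to $\norm{\Grad \varphi}_{L^{2}}$. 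An integration by parts of the Laplacian term (whose boundary contribution vanishes) gives
\[
\abs{\inner{\eta \Laplace \BB^{m}}{\varphi}} \leq \eta \norm{\Grad \BB^{m}}_{L^{2}} \norm{\Grad \varphi}_{L^{2}}.
\]

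For the nonlinear terms I exploit $\Div \uu^{m} = \Div \BB^{m} = 0$ to rewrite $(\uu^{m} \cdot \Grad) \BB^{m} = \Div(\uu^{m} \otimes \BB^{m})$ and $(\BB^{m} \cdot \Grad) \uu^{m} = \Div(\BB^{m} \otimes \uu^{m})$; one integration by parts followed by H\"older produces bounds of the form $\norm{\uu^{m}}_{L^{4}} \norm{\BB^{m}}_{L^{4}} \norm{\Grad \varphi}_{L^{2}}$. Assembling the three bounds and taking the supremum over $\varphi$,
\[
\norm{\pd_{t} \BB^{m}(t)}_{H^{-1}} \leq c \left( \norm{\Grad \BB^{m}(t)}_{L^{2}} + \norm{\uu^{m}(t)}_{L^{4}} \norm{\BB^{m}(t)}_{L^{4}} \right).
\]
Squaring and integrating over $[0,T]$, the $\norm{\Grad \BB^{m}}_{L^{2}}^{2}$ contribution is controlled directly by Proposition~\ref{prop:BddEnergyEstimates}. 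For the nonlinear piece I apply the classical Ladyzhenskaya inequality \eqref{eqn:Ladyzhenskaya} to $\BB^{m}$ (using its uniform $L^{\infty}(0,T;L^{2})$ bound) and the weak version \eqref{eqn:WeakLadyzhenskaya} to $\uu^{m}$ (using its uniform $L^{\infty}(0,T;L^{2,\infty})$ bound) to obtain
\[
\norm{\uu^{m}(t)}_{L^{4}}^{2} \norm{\BB^{m}(t)}_{L^{4}}^{2} \leq C \norm{\Grad \uu^{m}(t)}_{L^{2}} \norm{\Grad \BB^{m}(t)}_{L^{2}},
\]
with $C$ absorbing the $L^{\infty}$-in-time bounds from Proposition~\ref{prop:BddEnergyEstimates}. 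A final Cauchy--Schwarz in $t$ reduces the time integral to $\norm{\Grad \uu^{m}}_{L^{2}(0,T;L^{2})} \norm{\Grad \BB^{m}}_{L^{2}(0,T;L^{2})}$, which is again uniformly bounded.

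The crucial point, and the main obstacle, is the asymmetry between $\uu^{m}$ and $\BB^{m}$: only the weak-$L^{2}$ norm of $\uu^{m}$ is uniformly bounded in $t$, so the classical Ladyzhenskaya inequality is unavailable on that factor, and the generalised inequality \eqref{eqn:WeakLadyzhenskaya} developed in Section~\ref{sec:Interpolation} is exactly what closes the estimate. The only other technicality is the uniform boundedness of $P_{m}$ on $V$, which follows from the simultaneous $L^{2}$/$H^{1}_{0}$-orthogonality of the Stokes eigenbasis.
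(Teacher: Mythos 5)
Your proof is correct and follows essentially the same route as the paper's: test against $\varphi\in H^{1}_{0}$, move $P_{m}$ onto the test function by self-adjointness, bound the nonlinear terms by $\norm{\uu^{m}}_{L^{4}}\norm{\BB^{m}}_{L^{4}}$, and close with the classical Ladyzhenskaya inequality on $\BB^{m}$ and the weak version \eqref{eqn:WeakLadyzhenskaya} on $\uu^{m}$, then integrate in time using Proposition~\ref{prop:BddEnergyEstimates}. Your explicit justification that $P_{m}$ has norm one on $H^{1}_{0}\cap H$ (via the simultaneous $L^{2}$/$H^{1}_{0}$ orthogonality of the Stokes eigenbasis) is a point the paper passes over silently, and is a welcome addition.
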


\begin{proof}
Taking the $H^{-1}$ norm of the $\BB$ equation yields
\[
\bignorm{\frac{\pd \BB^{m}}{\pd t}}_{H^{-1}} \leq \eta \norm{\BB^{m}}_{H^{1}} + \norm{P_{m} [(\BB^{m} \cdot \Grad) \uu^{m}]}_{H^{-1}} + \norm{P_{m} [(\uu^{m} \cdot \Grad) \BB^{m}]}_{H^{-1}}.
\]
To estimate the nonlinear terms, we take the inner product with a test function $\phi \in H^{1}_{0}(\Omega)$ with $\norm{\phi}_{H^{1}} = 1$ and then take the supremum over such $\phi$:
\begin{align*}
\norm{P_{m} [(\BB^{m} \cdot \Grad) \uu^{m}]} &= \sup \abs{\inner{P_{m} [(\BB^{m} \cdot \Grad) \uu^{m}]}{\phi}} \\
&= \sup \abs{\inner{(\BB^{m} \cdot \Grad) P_{m}\phi}{\uu^{m}}} \\
&\leq \sup \norm{\BB^{m}}_{L^{4}} \norm{\uu^{m}}_{L^{4}} \norm{\Grad \phi}_{L^{2}} \\
&\leq \norm{\BB^{m}}_{L^{4}} \norm{\uu^{m}}_{L^{4}}.
\end{align*}
Similarly $\norm{P_{m} [(\uu^{m} \cdot \Grad) \BB^{m}]}_{H^{-1}} \leq \norm{\BB^{m}}_{L^{4}} \norm{\uu^{m}}_{L^{4}}$. By applying Ladyzhenskaya's inequality \eqref{eqn:Ladyzhenskaya} to $\BB^{m}$ and our weak Ladyzhenskaya's inequality \eqref{eqn:WeakLadyzhenskaya} to $\uu^{m}$, we obtain the the following estimate:
\[
\bignorm{\frac{\pd \BB^{m}}{\pd t}}_{H^{-1}}^{2} \leq \eta \norm{\BB^{m}}_{H^{1}}^{2} + c \norm{\BB^{m}}_{L^{2}} \norm{\BB^{m}}_{H^{1}} \norm{\uu^{m}}_{L^{2,\infty}} \norm{\uu^{m}}_{H^{1}},
\]
as required.
\end{proof}

So, in summary, we have the following:
\begin{align*}
\uu^{m} \text{ are uniformly bounded in } & L^{\infty}(0, T; L^{2,\infty}(\Omega)) \cap L^{2}(0, T; H^{1}(\Omega)), \\
\BB^{m} \text{ are uniformly bounded in } & L^{\infty}(0, T; L^{2}(\Omega)) \cap L^{2}(0, T; H^{1}(\Omega)), \\
\frac{\pd \BB^{m}}{\pd t}  \text{ are uniformly bounded in } & L^{2}(0, T; H^{-1}(\Omega)).
\end{align*}

To extract a convergent subsequence of $\BB^{m}$, we use the Aubin--Lions compactness lemma:

\begin{theorem}[Aubin--Lions compactness lemma]
\label{thm:AubinLions}
Let $X \subset B \subset Y$ be Banach spaces such that the inclusion $X \subset B$ is a compact embedding. Then, for any $1 < p < \infty$ and any $1 \leq q < \infty$, the space
\[
\left\{ f : f \in L^{p}(0, T; X) \text{ and } \frac{\pd f}{\pd t} \in L^{q}(0, T; Y) \right\}
\]
is compactly embedded in $L^{p}(0, T; B)$.
\end{theorem}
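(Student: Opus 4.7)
The strategy I would take is to prove that an arbitrary bounded sequence $(f_n)$ in the space $F = \{ f \in L^{p}(0,T;X) : \pd_t f \in L^{q}(0,T;Y) \}$ admits a subsequence that is Cauchy in $L^{p}(0, T; B)$. The two key ingredients are Ehrling's interpolation lemma (to reduce convergence in $B$ to convergence in $Y$) and a time-translation equicontinuity estimate (to obtain the $L^{p}(0,T;Y)$ convergence).

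First I would establish \emph{Ehrling's lemma}: from the compact inclusion $X \subset B$ and the continuous inclusion $B \subset Y$, for every $\eps > 0$ there exists $C_{\eps}$ such that
\[
\norm{u}_{B} \leq \eps \norm{u}_{X} + C_{\eps} \norm{u}_{Y} \qquad \text{for all } u \in X.
\]
The proof is a one-line contradiction: a violating sequence $(u_n)$ with $\norm{u_n}_X = 1$ has a $B$-convergent subsequence by compactness, whose limit has positive $B$-norm but zero $Y$-norm, impossible. Applied pointwise in $t$ and then in $L^p$ in time, this yields
\[
\norm{f_n - f_m}_{L^{p}(0,T;B)} \leq \eps \norm{f_n - f_m}_{L^{p}(0,T;X)} + C_{\eps} \norm{f_n - f_m}_{L^{p}(0,T;Y)},
\]
and since the first factor is bounded uniformly by hypothesis, it suffices to extract a subsequence that is Cauchy in $L^{p}(0, T; Y)$.

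For the $L^{p}(0,T;Y)$ convergence I would invoke a vector-valued Riesz--Fr\'echet--Kolmogorov-type compactness theorem, which requires boundedness (immediate), precompactness of appropriate sections in $Y$, and equicontinuity of time translates. Precompactness of sections follows by considering time averages $h^{-1}\int_{a}^{a+h} f_n(s)\,\rd s$, which are bounded in $X$ by H\"older and hence precompact in $B \subset Y$. The equicontinuity estimate is the main calculation: writing
\[
f_n(t+h) - f_n(t) = \int_{t}^{t+h} \pd_{s} f_n(s) \, \rd s
\]
and applying H\"older in $s$ with exponent $q$ gives $\norm{f_n(t+h) - f_n(t)}_{Y} \leq h^{1 - 1/q} \norm{\pd_{s} f_n}_{L^{q}(t, t+h; Y)}$; integrating in $t$ and again applying H\"older (or Minkowski) in the outer integral yields
\[
\norm{f_n(\cdot + h) - f_n(\cdot)}_{L^{p}(0, T-h; Y)} \leq C h^{\alpha}
\]
for an exponent $\alpha > 0$ depending on $p$ and $q$, uniformly in $n$.

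The main obstacle I anticipate is the quantitative bookkeeping of the exponents $p$ and $q$ in the time-translation estimate, especially in the case $q < p$, where one cannot simply pull the $L^q$ norm of $\pd_s f_n$ outside the outer $L^p$ integral; this is where the hypothesis $1 < p < \infty$ (reflexivity and the full range of H\"older) is being used. A minor additional point is to verify that the chosen vector-valued compactness criterion applies with $Y$ a general Banach space, which is standard but merits an explicit citation.
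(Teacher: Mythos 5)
Your proposal is a correct reconstruction of the standard proof, but note that the paper does not actually prove this lemma: its ``proof'' consists entirely of citations to Aubin, Lions, Temam, and (for the general range of exponents needed here, $p=2$, $q=1$ included) to Simon's 1987 paper. What you have sketched is essentially Simon's argument --- Ehrling's lemma to trade $B$-convergence for $Y$-convergence at the cost of an $\eps$ times the uniformly bounded $X$-norm, plus a vector-valued Riesz--Fr\'echet--Kolmogorov criterion whose ``pointwise compactness'' hypothesis is supplied by the relative compactness in $B$ of the time averages (bounded in $X$, hence precompact by the compact embedding) --- so you are in effect supplying the proof the paper outsources. Two small corrections to your bookkeeping. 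First, at the endpoint $q=1$, which is precisely the case the paper needs, your translation estimate $\norm{f_n(t+h)-f_n(t)}_{Y}\leq h^{1-1/q}\norm{\pd_s f_n}_{L^{q}(t,t+h;Y)}$ degenerates ($h^{1-1/q}=1$), so the naive bound does not vanish as $h\to 0$; you instead need to bound the outer integral by
\[
\int_{0}^{T-h}\Bigl(\int_{t}^{t+h}\norm{\pd_s f_n}_{Y}\,\rd s\Bigr)^{p}\rd t \leq \norm{\pd_s f_n}_{L^{1}(0,T;Y)}^{p-1}\int_{0}^{T-h}\int_{t}^{t+h}\norm{\pd_s f_n}_{Y}\,\rd s\,\rd t \leq C^{p}\,h,
\]
via Fubini, giving $\alpha=1/p$. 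Second, the hypothesis $1<p<\infty$ is not really about reflexivity or ``the full range of H\"older''; it is needed so that the Kolmogorov-type criterion characterises compactness in $L^{p}(0,T;B)$ (Simon's result in fact extends to $p=\infty$ with convergence in $C([0,T];B)$). Neither point undermines the argument; with the $q=1$ case patched as above, the proof is complete.
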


\begin{proof}
The original result of Aubin \cite{art:Aubin1963} and Lions \cite{book:Lions1969} covers the case when $1 < p, q < \infty$. Chapter 3 of Temam \cite{book:Temam} contains both the original case (see Theorem~2.1, p.~185), as well as the case $p=2$, $q=1$, whenever $X$ and $Y$ are Hilbert spaces (see Theorem~2.3, p.~187). The general case (and many other similar results) is proved in the paper of Simon \cite{art:Simon1987}, \S 8, Theorem 5 and Corollary 4.
\end{proof}

We use Theorem~\ref{thm:AubinLions} together with the Banach--Alaoglu compactness theorem to extract a subsequence, which we relabel as $\BB^{m}$, such that
\begin{align*}
\BB^{m} &\weakstarto \BB && \text{ in } L^{\infty}(0, T; L^{2}(\Omega)),\\
\BB^{m} &\weakto \BB && \text{ in } L^{2}(0, T; H^{1}(\Omega)),\\
\BB^{m} &\to \BB && \text{ in } L^{2}(0, T; L^{2}(\Omega)), \\
\frac{\pd \BB^{m}}{\pd t} &\weakstarto \frac{\pd \BB}{\pd t} && \text{ in } L^{2}(0, T; H^{-1}(\Omega)).
\end{align*}
Since the limit $\BB \in L^{\infty}(0, T; L^{2}(\Omega)) \cap L^{2}(0, T; H^{1}(\Omega))$, it is straightforward to show that $(\BB \cdot \Grad) \BB \in L^{2}(0, T; H^{-1}(\Omega))$. This allows us to \emph{define} $\uu$ to be the unique solution of
\begin{subequations}
\label{eqn:StokesRep}
\begin{align}
-\nu \Laplace \uu + \Grad p_{*} &= (\BB \cdot \Grad) \BB, \\
\Div \uu &= 0,
\end{align}
\end{subequations}
where $\uu \in L^{\infty}(0, T; L^{2,\infty}(\Omega)) \cap L^{2}(0, T; H^{1}(\Omega))$ by standard elliptic theory for the Stokes equations (see Section~\ref{sec:StokesElliptic} above, and Theorem~2.3 in Chapter~1 of \cite{book:Temam}). Having defined such a $\uu$, we now want to show that $\uu^{m}$ does indeed converge to this $\uu$ in the appropriate senses; this will allow us to show that the nonlinear terms involving $\uu$ converge and thus that the $\BB$ equation is satisfied in the limit.

\begin{proposition}
\label{prop:u-Convergence}
$\uu^{m} \to \uu$ strongly in $L^{2}(0, T; L^{2,\infty}(\Omega))$, and $\uu^{m} \weakto \uu$ weakly in $L^{2}(0, T; H^{1}(\Omega))$.
\end{proposition}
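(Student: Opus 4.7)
The plan is to leverage the linearity of the Stokes equation: the difference $\uu^m - \uu$ satisfies
\[
- \nu \Laplace (\uu^m - \uu) + \Grad (p^m_* - p_*) = \Div (\BB^m \otimes \BB^m - \BB \otimes \BB), \qquad \Div(\uu^m - \uu) = 0,
\]
so by the elliptic regularity estimate \eqref{eqn:StokesEstimate} from Section~\ref{sec:StokesElliptic}, applied with $\ff = \BB^m \otimes \BB^m - \BB \otimes \BB \in L^{1}(\Omega)$, we immediately get the pointwise (in time) bound
\[
\norm{\uu^m(t) - \uu(t)}_{L^{2,\infty}} \leq c \norm{\BB^m(t) \otimes \BB^m(t) - \BB(t) \otimes \BB(t)}_{L^{1}}.
\]

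Next I would use the algebraic identity
\[
\BB^m \otimes \BB^m - \BB \otimes \BB = (\BB^m - \BB) \otimes \BB^m + \BB \otimes (\BB^m - \BB)
\]
together with the Cauchy--Schwarz inequality to bound the right-hand side in $L^{1}$ by $\norm{\BB^m - \BB}_{L^{2}}\bigl(\norm{\BB^m}_{L^{2}} + \norm{\BB}_{L^{2}}\bigr)$. Squaring, integrating over $[0,T]$, and pulling out the essential supremum of $\norm{\BB^m}_{L^{2}} + \norm{\BB}_{L^{2}}$ (which is uniformly bounded by $2\norm{\BB_0}_{L^2}$ via Proposition~\ref{prop:BddEnergyEstimates}), I obtain
\[
\int_0^T \norm{\uu^m(t) - \uu(t)}_{L^{2,\infty}}^{2} \, \rd t \leq C \norm{\BB_0}_{L^{2}}^{2} \int_0^T \norm{\BB^m(t) - \BB(t)}_{L^{2}}^{2} \, \rd t,
\]
and the right-hand side tends to zero by the strong convergence $\BB^m \to \BB$ in $L^{2}(0,T;L^{2}(\Omega))$ obtained from Aubin--Lions just prior to the proposition. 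This gives the strong convergence $\uu^m \to \uu$ in $L^{2}(0,T;L^{2,\infty}(\Omega))$.

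For the weak convergence in $L^{2}(0,T;H^{1}(\Omega))$, the argument is a standard uniqueness-of-limits observation: the energy estimate from Proposition~\ref{prop:BddEnergyEstimates} tells us $\uu^m$ is bounded in $L^{2}(0,T;H^{1}(\Omega))$, so by Banach--Alaoglu every subsequence has a further subsequence converging weakly to some $\tilde{\uu} \in L^{2}(0,T;H^{1}(\Omega))$. Such weak convergence implies convergence in the sense of distributions on $(0,T) \times \Omega$, and hence in $L^{2}(0,T;L^{2,\infty}(\Omega))$ in the weak sense; but we have just shown that $\uu^m \to \uu$ strongly in that space, so $\tilde{\uu} = \uu$ almost everywhere. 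Since every weakly convergent subsequence has the same limit $\uu$, the full sequence converges weakly, as required.

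I do not anticipate a significant obstacle here: the two ingredients are both already in hand. The only place requiring a little care is the first step, where one must justify applying the $L^1 \to L^{2,\infty}$ elliptic estimate in each of the three geometries for $\Omega$; but this is precisely what Section~\ref{sec:StokesElliptic} has established, so the argument above works uniformly for bounded Lipschitz domains, $\R^2$, and the periodic torus.
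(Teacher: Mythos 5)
Your proposal is correct and follows essentially the same route as the paper: subtract the two Stokes problems, apply the $L^1\to L^{2,\infty}$ elliptic estimate to the bilinear decomposition of $\BB^m\otimes\BB^m-\BB\otimes\BB$, and conclude via the strong $L^2$ convergence of $\BB^m$; the weak $H^1$ limit is then identified with $\uu$ by uniqueness of limits in the weaker topology, exactly as in the paper (your subsequence-of-a-subsequence phrasing is just a slightly more careful way of saying the same thing).
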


\begin{proof}
Subtracting the equations for $\uu^{m}$ and $\uu$, we obtain
\begin{align*}
-\nu \Laplace (\uu^{m} - \uu) + \Grad (p^{m}_{*} - p_{*}) &= \Div ( \BB^{m} \otimes \BB^{m} - \BB \otimes \BB) \\ &= \Div [\BB^{m} \otimes (\BB^{m} - \BB) + (\BB^{m} - \BB) \otimes \BB].
\end{align*}
By elliptic regularity, we obtain
\begin{align*}
\norm{\uu^{m} - \uu}_{L^{2,\infty}} &\leq c \norm{\BB^{m} \otimes (\BB^{m} - \BB)}_{L^{1}} + c \norm{(\BB^{m} - \BB) \otimes \BB}_{L^{1}} \\
&\leq c \norm{\BB^{m}}_{L^{2}} \norm{\BB^{m} - \BB}_{L^{2}} + c \norm{\BB^{m} - \BB}_{L^{2}} \norm{\BB}_{L^{2}} \\
&= c \norm{\BB^{m} - \BB}_{L^{2}} \left( \norm{\BB^{m}}_{L^{2}} + \norm{\BB}_{L^{2}} \right) \\
&\leq c(K+M) \norm{\BB^{m} - \BB}_{L^{2}},
\end{align*}
where $K = \sup_{m \in \N} \sup_{t \in [0,T]} \norm{\BB^{m}}_{L^{2}}$, $M = \sup_{t \in [0,T]} \norm{\BB}_{L^{2}}$. Squaring and integrating in time yields
\[
\int_{0}^{T} \norm{\uu^{m}(t) - \uu(t)}_{L^{2,\infty}}^{2} \, \rd t \leq c \int_{0}^{T} \norm{\BB^{m}(t) - \BB(t)}_{L^{2}}^{2} \, \rd t.
\]
As the right-hand side converges to zero, so is the left-hand side, and hence $\uu^{m} \to \uu$ strongly in $L^{2}(0, T; L^{2,\infty}(\Omega))$. Let $\vv$ be the weak limit of $\uu^{m}$ in $L^{2}(0, T; H^{1}(\Omega))$; \linebreak it remains to show that $\uu = \vv$. As $H^{1}(\Omega) \subset L^{2}(\Omega) \subset L^{2,\infty}(\Omega)$, we have \linebreak$(L^{2,\infty})^{*}(\Omega) \subset L^{2}(\Omega) \subset H^{-1}(\Omega)$. So if $\uu^{m} \weakto \vv$ in $L^{2}(0, T; H^{1}(\Omega))$, then $\uu^{m} \weakto \vv$ in $L^{2}(0, T; L^{2,\infty}(\Omega))$ (because we are testing with a smaller set of functionals). But $\uu^{m} \to \uu$ strongly (and hence also weakly) in $L^{2}(0, T; L^{2,\infty}(\Omega))$, and thus by uniqueness of weak limits $\uu = \vv$, and the proposition is proved.
\end{proof}

We now proceed to show that the nonlinear terms in the $\BB$ equation converge. The following proposition is symmetric in $\BB$ and $\uu$, and thus applies to both the $(\uu \cdot \Grad) \BB$ and $(\BB \cdot \Grad) \uu$ terms.

\begin{proposition}
\label{prop:NonlinearConvergence}
Suppose that:
\begin{itemize}
\item[$\bullet$] $\uu^{m} \to \uu$ and $\BB^{m} \to \BB$ (strongly) in $L^{2}(0, T; L^{2,\infty}(\Omega))$; and
\item[$\bullet$] $\uu^{m}$, $\BB^{m}$ are uniformly bounded in $L^{\infty}(0, T; L^{2,\infty}(\Omega)) \cap L^{2}(0, T; H^{1}(\Omega))$.
\end{itemize}
Then (after passing to a subsequence)
\[
P_{m}[(\uu^{m} \cdot \Grad) \BB^{m}] \weakstarto (\uu \cdot \Grad) \BB \quad \text{in } L^{2}(0, T; H^{-1}(\Omega)).
\]
\end{proposition}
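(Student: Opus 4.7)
The plan is to decompose the difference and exploit the weak Ladyzhenskaya inequality \eqref{eqn:WeakLadyzhenskaya} to upgrade the given $L^{2}(0, T; L^{2,\infty})$ strong convergences to strong convergence in $L^{2}(0, T; L^{4})$. Applied to $\ff = \uu^{m} - \uu$ (and similarly to $\BB^{m} - \BB$), \eqref{eqn:WeakLadyzhenskaya} combined with Cauchy--Schwarz in time gives
\[
\norm{\ff}_{L^{2}(0, T; L^{4})}^{2} \leq c \norm{\ff}_{L^{2}(0, T; L^{2, \infty})} \norm{\ff}_{L^{2}(0, T; H^{1})} \to 0,
\]
since the first factor vanishes by hypothesis while the second stays bounded; the same inequality applied to $\uu^{m}$, $\uu$, $\BB^{m}$, $\BB$ themselves yields uniform $L^{2}(0, T; L^{4})$ bounds.

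With these improved convergences, I would split
\begin{align*}
P_{m}[(\uu^{m} \cdot \Grad) \BB^{m}] - (\uu \cdot \Grad) \BB &= P_{m}[((\uu^{m} - \uu) \cdot \Grad) \BB^{m}] \\
&\quad + P_{m}[(\uu \cdot \Grad)(\BB^{m} - \BB)] + (P_{m} - I)[(\uu \cdot \Grad) \BB],
\end{align*}
and bound each of the two ``nonlinear'' pieces by integration by parts (using $\Div \uu^{m} = \Div \uu = 0$ and test functions in $H^{1}_{0}$) followed by H\"older:
\[
\norm{((\uu^{m} - \uu) \cdot \Grad) \BB^{m}}_{H^{-1}} \leq c \norm{\BB^{m}}_{L^{4}} \norm{\uu^{m} - \uu}_{L^{4}}.
\]
Since $P_{m}$ is uniformly bounded on $H^{-1}$ (by duality with its uniform $H^{1}$ boundedness, via the Stokes spectral decomposition), integrating in time gives strong convergence of these two pieces to zero in $L^{1}(0, T; H^{-1})$; combined with uniform boundedness in $L^{2}(0, T; H^{-1})$ (established as in Proposition~\ref{prop:Bdd-dBdtEstimate}), this upgrades to weak-$*$ convergence to zero in $L^{2}(0, T; H^{-1})$.

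For the remaining projection correction $(P_{m} - I)[(\uu \cdot \Grad) \BB]$, I would test against $\phi \in C^{\infty}_{c}((0, T); V_{k})$ — a dense subclass of the divergence-free test functions in $L^{2}(0, T; H^{1}_{0})$ — for which $P_{m} \phi = \phi$ once $m \geq k$, so the pairing vanishes and the weak-$*$ limit is identified. The main obstacle is precisely this last step: because $(\uu \cdot \Grad) \BB$ is not divergence-free, strictly the sequence $P_{m} g$ converges to the Leray projection $\Pi g$, not $g$, and so the identification $P_{m}[(\uu \cdot \Grad) \BB] \to (\uu \cdot \Grad) \BB$ is to be understood modulo gradients — which is exactly what the weak formulation of the $\BB$-equation, posed against divergence-free test functions, requires.
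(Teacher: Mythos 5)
Your proposal is correct and follows essentially the same route as the paper: uniform $L^{2}(0,T;H^{-1})$ bounds via the weak Ladyzhenskaya inequality, a splitting of the difference into nonlinear pieces controlled by $L^{4}$ products (hence by the strong $L^{2}(0,T;L^{2,\infty})$ convergence together with the $L^{2}(0,T;H^{1})$ bounds), plus a projection-correction term handled by testing against functions on which $P_{m}$ eventually acts as the identity. Your closing remark that $P_{m}g \to \Pi g$ rather than $g$ is a fair point the paper leaves implicit: its own proof uses $\norm{P_{m}\phi - \phi}_{H^{1}} \to 0$, which likewise presupposes divergence-free test functions, so in both arguments the identification of the limit is modulo gradients, exactly as the weak formulation demands.
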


\begin{proof}
We begin by showing that $P_{m}[(\uu^{m} \cdot \Grad) \BB^{m}]$ are uniformly bounded in \linebreak$L^{2}(0, T; H^{-1}(\Omega))$: observe that
\begin{align*}
\norm{P_{m}[(\uu^{m} \cdot \Grad) \BB^{m}]}_{H^{-1}} &= \sup \abs{\inner{(\uu^{m} \cdot \Grad) (P_{m} \phi)}{\BB^{m}}} \\
&\leq \norm{\uu^{m}}_{L^{4}} \norm{\BB^{m}}_{L^{4}} \sup \norm{\Grad \phi}_{L^{2}} \\
&\leq \norm{\uu^{m}}_{L^{4}} \norm{\BB^{m}}_{L^{4}} \\
&\leq c \norm{\uu^{m}}_{L^{2,\infty}}^{1/2} \norm{\Grad \uu^{m}}_{L^{2}}^{1/2} \norm{\BB^{m}}_{L^{2,\infty}}^{1/2} \norm{\Grad \BB^{m}}_{L^{2}}^{1/2}
\end{align*}
(where the supremum is taken over all $\phi \in H^{1}(\Omega)$ with $\norm{\phi}_{H^{1}} = 1$). Thus, squaring and integrating in time and applying H\"{o}lder's inequality shows that $P_{m}[(\uu^{m} \cdot \Grad) \BB^{m}]$ is uniformly bounded in $L^{2}(0, T; H^{-1}(\Omega))$, and hence that a subsequence of $P_{m}[(\uu^{m} \cdot \Grad) \BB^{m}]$ converges weakly-$*$ in $L^{2}(0, T; H^{-1}(\Omega))$; as usual we relabel this subsequence as the original sequence.

To show that the limit is indeed $(\uu \cdot \Grad) \BB$, we test with a slightly more regular test function. Let $\phi \in C^{0}(0, T; H^{1}(\Omega))$. Then
\begin{align*}
&\int_{0}^{T} \inner{P_{m}[(\uu^{m} \cdot \Grad) \BB^{m}] - (\uu \cdot \Grad) \BB}{\phi} \, \rd t \\
& = \underbrace{\int_{0}^{T} \inner{P_{m}[(\uu^{m} \cdot \Grad) \BB^{m} - (\uu \cdot \Grad) \BB]}{\phi} \, \rd t}_{\mathrm{I}} +  \underbrace{\int_{0}^{T} \inner{(\uu \cdot \Grad) \BB}{P_{m} \phi - \phi} \, \rd t}_{\mathrm{II}}.
\end{align*}
For the second integral, note that
\begin{align*}
\mathrm{II} \leq \norm{(\uu \cdot \Grad) \BB}_{H^{-1}} \norm{P_{m} \phi - \phi}_{H^{1}} \to 0
\end{align*}
as $m \to \infty$.
For the first integral, we have
\begin{align*}
\mathrm{I} &= \int_{0}^{T} \inner{(\uu^{m} \cdot \Grad) \BB^{m} - (\uu \cdot \Grad) \BB}{P_{m}\phi} \, \rd t  \\
&= \int_{0}^{T} \inner{(\uu^{m} \cdot \Grad) (\BB^{m} - \BB) + ((\uu^{m} - \uu) \cdot \Grad) \BB}{P_{m}\phi} \, \rd t \\
&\leq \int_{0}^{T} \left( \norm{\uu^{m}}_{L^{4}} \norm{\BB^{m} - \BB}_{L^{4}} + \norm{\uu^{m} - \uu}_{L^{4}} \norm{\BB}_{L^{4}} \right) \norm{\Grad \phi}_{L^{2}} \, \rd t \\
&\leq \max_{t \in [0,T]} \norm{\Grad \phi}_{L^{2}} \int_{0}^{T} \left( \norm{\uu^{m}}_{L^{4}} \norm{\BB^{m} - \BB}_{L^{4}} + \norm{\uu^{m} - \uu}_{L^{4}} \norm{\BB}_{L^{4}} \right)  \, \rd t.
\end{align*}
Now, by the weak Ladyzhenskaya inequality \eqref{eqn:WeakLadyzhenskaya},
\begin{align*}
&\int_{0}^{T} \norm{\uu^{m}}_{L^{4}} \norm{\BB^{m} - \BB}_{L^{4}} \, \rd t \\
&\qquad \leq \int_{0}^{T} \norm{\uu^{m}}_{L^{2,\infty}}^{1/2} \norm{\Grad \uu^{m}}_{L^{2}}^{1/2} \norm{\BB^{m} - \BB}_{L^{2,\infty}}^{1/2} \norm{\Grad(\BB^{m} - \BB)}_{L^{2}}^{1/2} \\
&\qquad \leq \left( \int_{0}^{T} \norm{\uu^{m}}_{L^{2,\infty}}^{2} \right)^{1/4} \cdot \left( \int_{0}^{T} \norm{\Grad \uu^{m}}_{L^{2}}^{2} \right)^{1/4} \cdot \\
&\qquad \qquad \qquad \cdot \left( \int_{0}^{T} \norm{\BB^{m} - \BB}_{L^{2,\infty}}^{2} \right)^{1/4} \cdot \left( \int_{0}^{T} \norm{\Grad(\BB^{m} - \BB)}_{L^{2}}^{2} \right)^{1/4}.
\end{align*}
The first, second and fourth integrals are bounded independent of $m$, and the third tends to zero as $m \to \infty$, so $\int_{0}^{T} \norm{\uu^{m}}_{L^{4}} \norm{\BB^{m} - \BB}_{L^{4}} \, \rd t \to 0$ as $m \to \infty$. Similarly, $\int_{0}^{T} \norm{\uu^{m} - \uu}_{L^{4}} \norm{\BB}_{L^{4}} \, \rd t \to 0$ as $m \to \infty$. Hence
\[
\int_{0}^{T} \inner{P_{m}[(\uu^{m} \cdot \Grad) \BB^{m}] - (\uu \cdot \Grad) \BB}{\phi} \, \rd t \to 0 \quad \text{for all } \phi \in C^{0}(0, T; H^{1}(\Omega)).
\]
Thus $P_{m}[(\uu^{m} \cdot \Grad) \BB^{m}] \weakstarto (\uu \cdot \Grad) \BB$ in $L^{2}(0, T; H^{-1}(\Omega))$ by uniqueness of weak-$*$ limits.
\end{proof}

Hence $(\uu, \BB)$ is indeed a weak solution of \eqref{eqn:StokesMHD-Rep}, which completes the proof of Theorem~\ref{thm:WeakSolution} in the case where $\Omega$ is a Lipschitz bounded domain in $\R^{2}$.

\subsection{Global existence of weak solutions in $\R^{2}$}
\label{sec:R2}

We turn now to the proof of Theorem~\ref{thm:WeakSolution} in the case $\Omega = \R^{2}$. Instead of Galerkin approximations, we mollify the equations, and then show convergence as $\eps \to 0$. The arguments, though, are not so different from those in the previous section, so we only outline the main changes.

Let $\rho \geq 0$ be a smooth function with compact support (i.e. $\rho \in C_{c}^{\infty}(\R^{n})$) such that $\int_{\R^{n}} \rho = 1$. Given $\eps > 0$, we define $\rho_{\eps}$ by $\rho_{\eps} (x) := \frac{1}{\eps^{n}} \rho (x/\eps)$, and we define the operator $\Jeps$ by $\Jeps v = \rho_{\eps} \ast v$, i.e.
\[
(\Jeps v)(x) = \int_{\R^{n}} \rho_{\eps}(x-y) v(y) \, \rd y.
\]
For brevity we will write $\tilde{v} := \Jeps v$. We consider the mollified MHD equations on the whole of $\R^{2}$ as follows:
\begin{subequations}
\label{eqn:StokesMHD-Mollified}
\begin{align}
- \nu \Laplace \uu^{\eps} + \Grad p_{*}^{\eps} &= \Jeps[(\tilde{\BB^{\eps}} \cdot \Grad) \tilde{\BB^{\eps}}], \label{eqn:StokesMHD-Mollified-u} \\
\frac{\pd \BB^{\eps}}{\pd t} + \Jeps[(\tilde{\uu^{\eps}} \cdot \Grad) \tilde{\BB^{\eps}}] - \eta \Jeps \Laplace \tilde{\BB^{\eps}} &= \Jeps[(\tilde{\BB^{\eps}} \cdot \Grad) \tilde{\uu^{\eps}}], \label{eqn:StokesMHD-Mollified-B} \\
\Div \uu^{\eps} = \Div \BB^{\eps} &= 0.
\end{align}
\end{subequations}
As in the previous section, we think of $\uu^{\eps}$ as a function of $\BB^{\eps}$. Then, using standard properties of the mollifier $\Jeps$ (see \cite{book:MajdaBertozzi}, Lemmas~3.5 and 3.6), it is straightforward to show that equation~\eqref{eqn:StokesMHD-Mollified-B} is a locally Lipschitz ODE on $H$ --- one basically follows the proof of Proposition~3.6 in \cite{book:MajdaBertozzi}. Hence by the Picard theorem for infinite-dimensional ODEs, equation~\eqref{eqn:StokesMHD-Mollified-B} will have a unique solution $\BB^{\eps} \in H$, so long as $\norm{\BB^{\eps}}_{L^{2}}$ remains finite, and $\uu^{\eps}$ is given by equation~\eqref{eqn:StokesMHD-Mollified-u}.

Repeating the estimates of Propositions~\ref{prop:BddEnergyEstimates} and \ref{prop:Bdd-dBdtEstimate}, with slight modifications to account for the extra mollifiers, we again have the following:
\begin{align*}
\uu^{\eps} \text{ are uniformly bounded in } & L^{\infty}(0, T; L^{2,\infty}(\R^{2})) \cap L^{2}(0, T; \dot{H}^{1}(\R^{2})), \\
\BB^{\eps} \text{ are uniformly bounded in } & L^{\infty}(0, T; L^{2}(\R^{2})),\\
\Jeps \BB^{\eps} \text{ are uniformly bounded in } & L^{2}(0, T; \dot{H}^{1}(\R^{2})), \\
\frac{\pd \BB^{\eps}}{\pd t}  \text{ are uniformly bounded in } & L^{2}(0, T; H^{-1}(\R^{2})).
\end{align*}
Because we are working on $\R^{2}$, we cannot apply the Aubin--Lions compactness lemma (Theorem~\ref{thm:AubinLions}) directly (because the embedding $H^{1} \subset L^{2}$ is no longer compact). Instead, there exists a subsequence of $\BB^{\eps}$ which converges strongly in $L^{2}(0, T; L^{2}(K))$ for any compact subset $K \subset \R^{2}$ (see Proposition~2.7 in \cite{book:CDGG}), and the limit satisfies 
\[
\BB \in L^{\infty}(0, T; L^{2}(\R^{2})) \cap L^{2}(0, T; H^{1}(\R^{2})).
\]

Thus, we may again define $\uu$ to be the unique solution of equation~\eqref{eqn:StokesRep}. A modification of Proposition~\ref{prop:u-Convergence} shows that a subsequence of $\uu^{\eps}$ converges strongly to $\uu$ in $L^{2}(0, T; L^{2}(K))$ for any compact subset $K \subset \R^{2}$, and this local strong convergence allows us to pass to the limit in the nonlinear terms: an argument similar to Proposition~\ref{prop:NonlinearConvergence} will show that (after passing to a subsequence)
\[
\Jeps[(\tilde{\uu^{\eps}} \cdot \Grad) \tilde{\BB^{\eps}}] \weakstarto (\uu \cdot \Grad) \BB, \qquad \Jeps[(\tilde{\BB^{\eps}} \cdot \Grad) \tilde{\uu^{\eps}}] \weakstarto (\BB \cdot \Grad) \uu
\]
in $L^{2}(0, T; H^{-1}(\R^{2}))$ (see \S2.2.4 of \cite{book:CDGG} for full details). Thus $(\uu, \BB)$ is indeed a weak solution of \eqref{eqn:StokesMHD-Rep}, which completes the proof of Theorem~\ref{thm:WeakSolution} in the case $\Omega = \R^{2}$.

\subsection{Uniqueness}
\label{sec:Uniqueness}

We now prove that weak solutions are unique. Note that the following proof applies equally in all three cases of Theorem~\ref{thm:MainTheorem}.

\begin{proposition}
Let $(\uu_{j}, \BB_{j})$, $j = 1, 2$, be two weak solutions with the same initial condition $\BB_{j}(0) = \BB_{0}$, such that 
\begin{align*}
\uu_{j} &\in L^{\infty}(0, T; L^{2, \infty}(\Omega)) \cap L^{2}(0, T; H^{1}(\Omega)), \\
\BB_{j} &\in L^{\infty}(0, T; L^{2}(\Omega)) \cap L^{2}(0, T; H^{1}(\Omega)).
\end{align*}
Then $\uu_{1} = \uu_{2}$ and $\BB_{1} = \BB_{2}$ as functions in the above spaces.
\end{proposition}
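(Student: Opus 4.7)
The plan is to run the standard 2D energy-uniqueness argument on the difference of two solutions, with the twist that, since $\uu$ lies only in $L^{2,\infty}$, the weak Ladyzhenskaya inequality \eqref{eqn:WeakLadyzhenskaya} and the Stokes elliptic estimate \eqref{eqn:StokesEstimate} must take the place of their classical counterparts. Setting $\uu := \uu_{1} - \uu_{2}$, $\BB := \BB_{1} - \BB_{2}$ and $\pi := p_{*,1} - p_{*,2}$, the pair $(\uu, \BB)$ satisfies
\begin{align*}
-\nu \Laplace \uu + \Grad \pi &= (\BB \cdot \Grad) \BB_{1} + (\BB_{2} \cdot \Grad) \BB, \\
\pd_{t} \BB + (\uu \cdot \Grad) \BB_{1} + (\uu_{2} \cdot \Grad) \BB - \eta \Laplace \BB &= (\BB \cdot \Grad) \uu_{1} + (\BB_{2} \cdot \Grad) \uu,
\end{align*}
with $\BB(0) = 0$ and $\Div \uu = \Div \BB = 0$.

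Next, I would test the $\BB$-equation with $\BB$ and the $\uu$-equation with $\uu$; the regularity $\BB \in L^{2}(0,T;H^{1})$ with $\pd_{t} \BB \in L^{2}(0,T;H^{-1})$ (which is inherited from the bounds of Proposition~\ref{prop:Bdd-dBdtEstimate} under the weak limits of Section~\ref{sec:BddDomain}) makes the first pairing legitimate through the Lions--Magenes identity $\frac{\rd}{\rd t}\norm{\BB}_{L^{2}}^{2} = 2\inner{\pd_{t}\BB}{\BB}$. The term $\int (\uu_{2} \cdot \Grad)\BB \cdot \BB$ vanishes by $\Div \uu_{2} = 0$, and adding the two resulting identities produces a second cancellation,
\[
\int (\BB_{2} \cdot \Grad)\uu \cdot \BB + \int (\BB_{2} \cdot \Grad)\BB \cdot \uu = \int (\BB_{2} \cdot \Grad)(\uu \cdot \BB) = 0,
\]
so that only three trilinear terms remain:
\[
\frac{1}{2}\frac{\rd}{\rd t}\norm{\BB}_{L^{2}}^{2} + \nu \norm{\Grad \uu}_{L^{2}}^{2} + \eta \norm{\Grad \BB}_{L^{2}}^{2} = A + B + C,
\]
with $A = -\int (\uu \cdot \Grad)\BB_{1}\cdot\BB$, $B = \int (\BB \cdot \Grad) \uu_{1}\cdot\BB$ and $C = \int (\BB \cdot \Grad) \BB_{1}\cdot\uu$.

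Then I would bound each of $|A|, |B|, |C|$ by H\"older's inequality in $L^{4}\times L^{2}\times L^{4}$, applying Ladyzhenskaya's inequality \eqref{eqn:Ladyzhenskaya} to the $L^{4}$ norms of $\BB$ and the weak Ladyzhenskaya inequality \eqref{eqn:WeakLadyzhenskaya} to the $L^{4}$ norms of $\uu$. Writing $K := \esssup_{t}(\norm{\BB_{1}}_{L^{2}} + \norm{\BB_{2}}_{L^{2}})$, the crucial substitution $\norm{\uu}_{L^{2,\infty}} \leq cK\norm{\BB}_{L^{2}}$ from \eqref{eqn:StokesEstimate} trades the weak-$L^{2}$ norm of $\uu$ for the $L^{2}$ norm of $\BB$; a sequence of Young inequalities then absorbs the half-powers of $\norm{\Grad \uu}_{L^{2}}$ and $\norm{\Grad \BB}_{L^{2}}$ that this produces into the dissipation on the left, yielding
\[
\frac{\rd}{\rd t}\norm{\BB(t)}_{L^{2}}^{2} \leq g(t)\norm{\BB(t)}_{L^{2}}^{2}
\]
with $g(t) = C\bigl(K\norm{\Grad \BB_{1}(t)}_{L^{2}}^{2} + \norm{\Grad \uu_{1}(t)}_{L^{2}}^{2}\bigr) \in L^{1}(0,T)$. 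Since $\norm{\BB(0)}_{L^{2}} = 0$, Gr\"onwall's inequality forces $\BB \equiv 0$, and then $\uu \equiv 0$ follows from \eqref{eqn:StokesEstimate} applied to the Stokes equation with vanishing right-hand side.

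The main obstacle is technical rather than conceptual: because $\uu$ is only in weak-$L^{2}$, the classical Ladyzhenskaya bound is unavailable, and every occurrence of $\norm{\uu}_{L^{2,\infty}}$ must be converted to $\norm{\BB}_{L^{2}}$ via the Stokes $L^{1}$-elliptic estimate so that Gr\"onwall actually closes on $\norm{\BB}_{L^{2}}^{2}$. The key point is that the $\nu\norm{\Grad \uu}_{L^{2}}^{2}$ term produced by testing the Stokes equation is precisely what is needed to absorb the half-power of $\norm{\Grad \uu}_{L^{2}}$ arising in $A$ and $C$ through the weak Ladyzhenskaya inequality; without testing the $\uu$-equation at all one would have no gradient of $\uu$ to absorb, and the estimate would not close.
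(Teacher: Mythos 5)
Your argument is correct and closes, but it is organised differently from the paper's. You test the difference Stokes equation with $\ww$ and \emph{add} the result to the $\zz$-energy identity, so that the symmetric pair $\int (\BB_{2} \cdot \Grad)\ww \cdot \zz + \int (\BB_{2} \cdot \Grad)\zz \cdot \ww = 0$ cancels and the dissipation $\nu\norm{\Grad \ww}_{L^{2}}^{2}$ survives on the left to absorb, via Young, the half-powers of $\norm{\Grad \ww}_{L^{2}}$ coming from the weak Ladyzhenskaya bound on $\norm{\ww}_{L^{4}}$. The paper never adds the two identities: it uses the $\ww$-equation separately, first testing it with $\ww$ to get the pointwise-in-time bound $\nu\norm{\Grad \ww}_{L^{2}} \leq c\norm{\zz}_{L^{2}}^{1/2}\norm{\Grad\zz}_{L^{2}}^{1/2}(\norm{\Grad\BB_{1}}_{L^{2}}^{1/2}+\norm{\Grad\BB_{2}}_{L^{2}}^{1/2})$, then combining this with the $L^{1}$ elliptic estimate $\norm{\ww}_{L^{2,\infty}} \leq c\norm{\zz}_{L^{2}}$ to obtain $\norm{\ww}_{L^{4}}$ entirely in terms of $\zz$, which it substitutes into the single $\zz$-energy inequality. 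Both routes rest on the same two ingredients (the weak Ladyzhenskaya inequality and the $L^{1}\to L^{2,\infty}$ Stokes estimate) and produce the same Gr\"onwall factor $\norm{\Grad\BB_{1}}_{L^{2}}^{2}+\norm{\Grad\BB_{2}}_{L^{2}}^{2}$ (your version also carries $\norm{\Grad\uu_{1}}_{L^{2}}^{2}$, which is equally integrable). Your closing remark should be softened slightly: it is true that the $\uu$-equation must be used, but the paper shows that one need not keep $\nu\norm{\Grad\ww}_{L^{2}}^{2}$ as dissipation to absorb anything --- one can instead trade $\norm{\Grad\ww}_{L^{2}}$ for powers of $\norm{\zz}_{L^{2}}$ and $\norm{\Grad\zz}_{L^{2}}$ and absorb only into $\eta\norm{\Grad\zz}_{L^{2}}^{2}$. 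Your variant buys a slightly cleaner symmetric cancellation; the paper's buys a self-contained estimate for $\ww$ that is reused when concluding $\ww = 0$ at the end.
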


\begin{proof}
Take the equations for $(\uu_{1}, \BB_{1})$ and $(\uu_{2}, \BB_{2})$ and subtract: writing $\ww = \uu_{1} - \uu_{2}$, $\zz = \BB_{1} - \BB_{2}$ and $q = p_{1} - p_{2}$, we obtain
\begin{subequations}
\begin{align}
- \nu \Laplace \ww + \Grad q &= (\BB_{1} \cdot \Grad) \zz + (\zz \cdot \Grad) \BB_{2}, \label{eqn:MHDDiff-u} \\
\frac{\pd \zz}{\pd t} + (\uu_{1} \cdot \Grad) \zz + (\ww \cdot \Grad) \BB_{2} - \eta \Laplace \zz &= (\BB_{1} \cdot \Grad) \ww + (\zz \cdot \Grad) \uu_{2}, \label{eqn:MHDDiff-B} \\
\Div \ww = \Div \zz &= 0.
\end{align}
\end{subequations}
Taking the inner product of \eqref{eqn:MHDDiff-u} with $\ww$ yields
\[
\nu \norm{\Grad \ww}_{L^{2}}^{2} \leq (\norm{\BB_{1}}_{L^{4}} + \norm{\BB_{2}}_{L^{4}}) \norm{\zz}_{L^{4}} \norm{\Grad \ww}_{L^{2}},
\]
so dividing through and using Ladyzhenskaya's inequality we get
\begin{align}
\nu \norm{\Grad \ww}_{L^{2}} &\leq \norm{\zz}_{L^{4}} (\norm{\BB_{1}}_{L^{4}} + \norm{\BB_{2}}_{L^{4}}) \notag \\
&\leq  c \norm{\zz}_{L^{2}}^{1/2} \norm{\Grad \zz}_{L^{2}}^{1/2} (\norm{\BB_{1}}_{L^{2}}^{1/2} \norm{\Grad \BB_{1}}_{L^{2}}^{1/2} + \norm{\BB_{2}}_{L^{2}}^{1/2} \norm{\Grad \BB_{2}}_{L^{2}}^{1/2}) \notag \\
&\leq  c \norm{\zz}_{L^{2}}^{1/2} \norm{\Grad \zz}_{L^{2}}^{1/2} (\norm{\Grad \BB_{1}}_{L^{2}}^{1/2} + \norm{\Grad \BB_{2}}_{L^{2}}^{1/2}), \label{eqn:UniquenessH1Bound-w}
\end{align}
since $\BB_{j} \in L^{\infty}(0, T; L^{2}(\Omega))$. By the elliptic regularity arguments from Section~\ref{sec:StokesElliptic}, we obtain
\begin{equation}
\label{eqn:UniquenessL2WBound-w}
\norm{\ww}_{L^{2,\infty}} \leq c (\norm{\BB_{1} \zz}_{L^{1}} + \norm{\BB_{2} \zz}_{L^{1}}) \leq c \norm{\zz}_{L^{2}} (\norm{\BB_{1}}_{L^{2}} + \norm{\BB_{2}}_{L^{2}}) \leq c \norm{\zz}_{L^{2}}.
\end{equation}
Using the weak Ladyzhenskaya inequality \eqref{eqn:WeakLadyzhenskaya}, we obtain bounds in $L^{4}$ as follows:
\begin{equation}
\label{eqn:UniquenessL4Bound-w}
\norm{\ww}_{L^{4}} \leq \frac{c}{\nu} \norm{\zz}_{L^{2}}^{3/4} \norm{\Grad \zz}_{L^{2}}^{1/4} (\norm{\Grad \BB_{1}}_{L^{2}}^{1/4} + \norm{\Grad \BB_{2}}_{L^{2}}^{1/4}).
\end{equation}

By a similar argument, we can take the inner product with $\uu_{j}$ of the original equation for $\uu_{i}$ \eqref{eqn:StokesMHD-u} and obtain
\[
\norm{\Grad \uu_{j}}_{L^{2}} \leq c \norm{\BB_{j}}_{L^{4}}^{2} \leq c \norm{\BB_{j}}_{L^{2}} \norm{\Grad \BB_{j}}_{L^{2}},
\]
so using \eqref{eqn:WeakLadyzhenskaya} we get
\begin{equation}
\label{eqn:UniquenessL4Bound-u}
\norm{\uu_{j}}_{L^{4}} \leq c \norm{\uu_{j}}_{L^{2,\infty}}^{1/2} \norm{\Grad \uu_{j}}_{L^{2}}^{1/2} \leq c \norm{\BB_{j}}_{L^{2}}^{3/2} \norm{\Grad \BB_{j}}_{L^{2}}^{1/2} \leq c \norm{\Grad \BB_{j}}_{L^{2}}^{1/2},
\end{equation}
using elliptic regularity.

Since $\frac{\pd \zz}{\pd t} \in L^{2}(0, T; H^{-1}(\Omega))$ and $\zz \in L^{2}(0, T; H^{1}(\Omega))$, we can take the the inner product of \eqref{eqn:MHDDiff-B} with $\zz$ and obtain
\begin{align*}
&\frac{1}{2} \frac{\rd}{\rd t} \norm{\zz}_{L^{2}}^{2} + \eta \norm{\Grad \zz}_{L^{2}}^{2} \\
&\leq c \underbrace{\norm{\Grad \zz}_{L^{2}} \norm{\ww}_{L^{4}} (\norm{\BB_{1}}_{L^{4}} + \norm{\BB_{2}}_{L^{4}})}_{\mathrm{I}} + c \underbrace{\norm{\Grad \zz}_{L^{2}} \norm{\zz}_{L^{4}} ( \norm{\uu_{1}}_{L^{4}} + \norm{\uu_{2}}_{L^{4}} )}_{\mathrm{II}}.
\end{align*}
We bound $\mathrm{I}$ and $\mathrm{II}$ separately, using Ladyzhenskaya's inequality several times. By using \eqref{eqn:UniquenessL4Bound-w} and Young's inequality with $\eps = \frac{\eta}{4}$ for $8/5$ and $8/3$ on $\mathrm{I}$, we obtain
\begin{align*}
\mathrm{I} &\leq \frac{c}{\nu} \norm{\zz}_{L^{2}}^{3/4} \norm{\Grad \zz}_{L^{2}}^{5/4} (\norm{\Grad \BB_{1}}_{L^{2}}^{1/4} + \norm{\Grad \BB_{2}}_{L^{2}}^{1/4}) (\norm{\Grad \BB_{1}}_{L^{2}}^{1/2} + \norm{\Grad \BB_{2}}_{L^{2}}^{1/2}) \\
&\leq \frac{c}{\nu} \norm{\zz}_{L^{2}}^{3/4} \norm{\Grad \zz}_{L^{2}}^{5/4} (\norm{\Grad \BB_{1}}_{L^{2}}^{3/4} + \norm{\Grad \BB_{2}}_{L^{2}}^{3/4}) \\
&\leq \frac{\eta}{4} \norm{\Grad \zz}_{L^{2}}^{2} + \frac{c}{\nu \eta^{5/3}} \norm{\zz}_{L^{2}}^{2} (\norm{\Grad \BB_{1}}_{L^{2}}^{2} + \norm{\Grad \BB_{2}}_{L^{2}}^{2}).
\end{align*}
Similarly, by using \eqref{eqn:UniquenessL4Bound-u} and Young's inequality with $\eps = \frac{\eta}{4}$ for $4/3$ and $4$ on $\mathrm{II}$, we obtain
\begin{align*}
\mathrm{II} &\leq \norm{\zz}_{L^{2}}^{1/2} \norm{\Grad \zz}_{L^{2}}^{3/2}  (\norm{\Grad \BB_{1}}_{L^{2}}^{1/2} + \norm{\Grad \BB_{2}}_{L^{2}}^{1/2} ) \\
&\leq \frac{\eta}{4} \norm{\Grad \zz}_{L^{2}}^{2} + \frac{c}{\eta^{3}} \norm{\zz}_{L^{2}}^{2} (\norm{\Grad \BB_{1}}_{L^{2}}^{2} + \norm{\Grad \BB_{2}}_{L^{2}}^{2} ).
\end{align*}
Hence
\begin{equation}
\label{eqn:UniquenessFinal}
\frac{\rd}{\rd t} \norm{\zz}_{L^{2}}^{2} + \eta \norm{\Grad \zz}_{L^{2}}^{2} \leq \frac{c}{\nu \eta^{3}} \norm{\zz}_{L^{2}}^{2} (\norm{\Grad \BB_{1}}_{L^{2}}^{2} + \norm{\Grad \BB_{2}}_{L^{2}}^{2} ).
\end{equation}
By Gr\"{o}nwall's inequality, for all $t < T$ we have
\[
\norm{\zz(t)}_{L^{2}}^{2} \leq \norm{\zz_{0}}_{L^{2}}^{2} \exp \left( \int_{0}^{t} \norm{\Grad \BB_{1}(s)}_{L^{2}}^{2} \, \rd s + \int_{0}^{t} \norm{\Grad \BB_{2}(s)}_{L^{2}}^{2} \, \rd s \right).
\]
As $\BB_{j} \in L^{2}(0, T; H^{1}(\Omega))$, the integrals in the exponential are bounded, and since the initial conditions were the same, $\zz_{0} = 0$, and hence $\norm{\zz}_{L^{2}} = 0$ on $[0,T]$. By \eqref{eqn:UniquenessFinal}, $\norm{\Grad \zz}_{L^{2}} = 0$ on $[0,T]$. Hence, by \eqref{eqn:UniquenessL2WBound-w}, $\norm{\ww}_{L^{2,\infty}} = 0$ on $[0,T]$, and by \eqref{eqn:UniquenessH1Bound-w}, $\norm{\Grad \ww}_{L^{2}} = 0$ on $[0,T]$. This proves uniqueness of weak solutions.
\end{proof}

This completes the proof of Theorem~\ref{thm:WeakSolution}.

\section{Higher-order regularity estimates}
\label{sec:HigherOrder}

In this section, we prove the second part of Theorem~\ref{thm:MainTheorem}; that is, that the solution $(\uu, \BB)$ becomes smooth after an arbitrarily short time $\eps > 0$. In particular, we prove that if we start with initial data in $H^{k}(\Omega)$, then the solution stays in $H^{k}(\Omega)$ for all time:

\begin{theorem}
\label{thm:HigherOrder}
Let $k \in \N$. Suppose $\BB_{0} \in H^{k}(\Omega)$ with $\Div \BB_{0} = 0$. Then, for any $T > 0$, the unique weak solution of \eqref{eqn:StokesMHD-Rep} satisfies
\[
\uu, \BB \in L^{\infty}(0, T; H^{k}(\Omega)) \cap L^{2}(0, T; H^{k+1}(\Omega)).
\]
\end{theorem}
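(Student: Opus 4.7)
The proof of Theorem~\ref{thm:HigherOrder} would proceed by induction on $k$, the base case $k=0$ being the content of Theorem~\ref{thm:WeakSolution}. For the inductive step, assume $\BB \in L^\infty(0,T; H^{k-1}) \cap L^2(0,T; H^k)$. The natural energy identity is obtained by testing the parabolic equation \eqref{eqn:StokesMHD-Rep-B} with $(-\Laplace)^k \BB$, made rigorous at the Galerkin or mollified level as in Section~\ref{sec:ExistenceUniqueness} and then passed to the limit:
\begin{equation*}
\frac{1}{2} \frac{\rd}{\rd t} \norm{\Grad^k \BB}_{L^2}^2 + \eta \norm{\Grad^{k+1} \BB}_{L^2}^2 = \mathcal{N}_1(t) + \mathcal{N}_2(t),
\end{equation*}
where $\mathcal{N}_1, \mathcal{N}_2$ collect the nonlinear contributions from $(\uu \cdot \Grad)\BB$ and $(\BB \cdot \Grad)\uu$, respectively. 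The strategy is to bound $|\mathcal{N}_1| + |\mathcal{N}_2|$ by $\tfrac{\eta}{2}\norm{\Grad^{k+1}\BB}_{L^2}^2 + \Psi(t) \norm{\Grad^k \BB}_{L^2}^2$ for some $\Psi \in L^1(0,T)$ depending only on norms controlled by the inductive hypothesis, then close via Gr\"onwall's inequality.

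Two auxiliary ingredients are needed. First, elliptic regularity for the Stokes system from Section~\ref{sec:StokesElliptic} promotes pointwise control of $\BB$ into pointwise control of $\uu$: for $k \geq 2$, $H^{k-1}$ is a Banach algebra in two dimensions, so $\BB \otimes \BB \in H^{k-1}$ and hence $\uu \in H^{k+1}$, while for $k = 1$ the estimate $\norm{\BB \otimes \BB}_{L^2} \leq \norm{\BB}_{L^4}^2$ combined with Ladyzhenskaya gives $\uu \in H^2$ once $\BB \in H^1$. Second, the nonlinear terms are estimated by H\"older combined with Moser-type product estimates: for $k \geq 2$ the embedding $H^k \hookrightarrow L^\infty$ in two dimensions makes these routine, whereas for $k = 1$ one must use the weak Ladyzhenskaya inequality \eqref{eqn:WeakLadyzhenskaya} for the factors involving $\uu$, in the same spirit as the uniqueness proof of Section~\ref{sec:Uniqueness}.

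The main obstacle is the $k = 1$ case, since $\uu$ is only controlled in the weak space $L^{2,\infty}$ pointwise in time. After H\"older's inequality, the typical term to estimate is $\norm{\uu}_{L^4} \norm{\Grad\BB}_{L^4} \norm{\Laplace\BB}_{L^2}$; applying the weak Ladyzhenskaya inequality to $\uu$ and the standard Ladyzhenskaya inequality to $\Grad\BB$ yields
\begin{equation*}
c \norm{\uu}_{L^{2,\infty}}^{1/2} \norm{\Grad\uu}_{L^2}^{1/2} \norm{\Grad\BB}_{L^2}^{1/2} \norm{\Laplace\BB}_{L^2}^{3/2},
\end{equation*}
and Young's inequality absorbs the top-order factor, leaving a coefficient for $\norm{\Grad\BB}_{L^2}^2$ which is integrable in $t$ by the $L^\infty L^{2,\infty}$ and $L^2 H^1$ bounds established in Propositions~\ref{prop:BddEnergyEstimates} and \ref{prop:u-Convergence}. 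The second nonlinear term $\int (\BB\cdot\Grad)\uu\cdot\Laplace\BB$ is handled similarly, using elliptic regularity to trade $\norm{\Grad\uu}_{L^4}$ for $\norm{\BB}_{L^8}^2$ via Stokes and then interpolating through $H^1$. Once the $k = 1$ bound is available, successive values of $k$ follow by the same template without the weak-space complications, and the regularity of $\uu$ follows from $\BB$ by the Stokes elliptic estimate; the main technical content lies in crossing the threshold from the weak solution regularity up to $H^1$.
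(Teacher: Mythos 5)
Your overall architecture --- induction on $k$, a high-order energy estimate closed by Gr\"onwall, the weak Ladyzhenskaya inequality \eqref{eqn:WeakLadyzhenskaya} to handle $\uu$ at the lowest level, and elliptic regularity to recover $\uu$ from $\BB$ --- matches the paper's. But you miss the paper's one structural device, and the substitute you propose for it is the weakest point of your argument. The paper does not test only the $\BB$-equation: it also takes the inner product of the elliptic $\uu$-equation \eqref{eqn:StokesMHD-Rep-u} with $(-1)^{k}\Laplace^{k}\uu$ and \emph{adds} the two identities. This costs nothing (the $\uu$-equation has no time derivative) and puts $\nu\norm{\uu}_{H^{k+1}}^{2}$ on the left-hand side, which is then used to absorb, via Young's inequality, every factor of $\norm{\Grad\uu}_{L^{4}}$ or $\norm{\uu}_{H^{k+1}}$ arising from the $(\BB\cdot\Grad)\uu$ and $(\BB\cdot\Grad)\BB$ terms; for instance at $k=1$ one bounds $\abs{\inner{(\BB\cdot\Grad)\uu}{\Laplace\BB}}$ by $c\norm{\Grad\uu}_{L^{2}}^{1/2}\norm{\Laplace\uu}_{L^{2}}^{1/2}\norm{\BB}_{L^{2}}^{1/2}\norm{\Grad\BB}_{L^{2}}^{1/2}\norm{\Laplace\BB}_{L^{2}}$ and absorbs the $\norm{\Laplace\uu}_{L^{2}}^{1/2}$ into the new dissipation term $\nu\norm{\Laplace\uu}_{L^{2}}^{2}$. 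Your alternative --- trading $\norm{\Grad\uu}_{L^{4}}$ for $\norm{\BB}_{L^{8}}^{2}$ by Calder\'on--Zygmund regularity for the Stokes system --- does give an exponent count that closes (the Gr\"onwall coefficient becomes $\norm{\BB}_{L^{2}}^{2}\norm{\Grad\BB}_{L^{2}}^{2}\in L^{1}(0,T)$), but it invokes $W^{1,4}$ solvability of the Stokes problem, which the paper never establishes and which on a bounded \emph{Lipschitz} domain is a delicate near-endpoint result; the paper's route needs nothing beyond the $H^{-1}\to H^{1}$ and $L^{1}\to L^{2,\infty}$ elliptic estimates already proved in Section~\ref{sec:StokesElliptic}.

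Second, your claim that for $k\geq 2$ the product estimates are routine because $H^{k-1}$ is a Banach algebra in two dimensions fails at $k=2$: $H^{1}(\R^{2})$ is \emph{not} an algebra (it does not embed in $L^{\infty}$), which is exactly why Lemma~\ref{lemma:CF-Nonlinear} requires $s>n/2$ and why the paper recovers $\uu\in L^{\infty}(0,T;H^{2})$ at $k=2$ by a separate Ladyzhenskaya-type estimate, $\norm{\Laplace\uu}_{L^{2}}\leq c\norm{\BB}_{L^{2}}^{1/2}\norm{\Grad\BB}_{L^{2}}\norm{\Laplace\BB}_{L^{2}}^{1/2}$, rather than through the algebra property. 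With these two repairs --- add the tested $\uu$-equation to gain the $\nu\norm{\uu}_{H^{k+1}}^{2}$ dissipation, and replace the algebra argument at $k=2$ by interpolation --- your outline coincides with the paper's proof.
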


This immediately implies that the solution $(\uu, \BB)$ becomes smooth after an arbitrarily short time $\eps > 0$.

\begin{corollary}
Given any $T > \eps > 0$ and any $k \in \N$, the unique weak solution of \eqref{eqn:StokesMHD-Rep} satisfies
\[
\uu, \BB \in L^{\infty}(\eps, T; H^{k}(\Omega)).
\]
\end{corollary}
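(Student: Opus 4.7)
The plan is to deduce the corollary from Theorem~\ref{thm:HigherOrder} alone, via a time-bootstrap that upgrades the regularity of $(\uu,\BB)$ in a sequence of arbitrarily small steps inside $(0,\eps)$. From Theorem~\ref{thm:WeakSolution} I already have $\BB\in L^{2}(0,T;H^{1}(\Omega))$, so by Chebyshev's inequality almost every $t\in(0,\eps/k)$ satisfies $\BB(t)\in H^{1}(\Omega)$; I pick any such time and call it $t_{1}$.

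Next I would invoke Theorem~\ref{thm:HigherOrder} on the interval $(t_{1},T)$ with initial datum $\BB(t_{1})\in H^{1}(\Omega)$, producing a solution in $L^{\infty}(t_{1},T;H^{1}(\Omega))\cap L^{2}(t_{1},T;H^{2}(\Omega))$. Uniqueness of weak solutions, already established, identifies this with the restriction of the original $(\uu,\BB)$, so the same regularity transfers. In particular $\BB\in L^{2}(t_{1},t_{1}+\eps/k;H^{2}(\Omega))$, and a further Chebyshev argument supplies $t_{2}\in(t_{1},t_{1}+\eps/k)$ with $\BB(t_{2})\in H^{2}(\Omega)$.

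I would then iterate: at step $j$, apply Theorem~\ref{thm:HigherOrder} with initial datum $\BB(t_{j})\in H^{j}(\Omega)$ to obtain
\[
\uu,\BB\in L^{\infty}(t_{j},T;H^{j}(\Omega))\cap L^{2}(t_{j},T;H^{j+1}(\Omega)),
\]
and then pick $t_{j+1}\in(t_{j},t_{j}+\eps/k)$ with $\BB(t_{j+1})\in H^{j+1}(\Omega)$. After $k$ such iterations I arrive at a time $t_{k}\leq\eps$ with $\BB(t_{k})\in H^{k}(\Omega)$; one final application of Theorem~\ref{thm:HigherOrder} yields $\uu,\BB\in L^{\infty}(t_{k},T;H^{k}(\Omega))$, and restricting to the subinterval $(\eps,T)\subset(t_{k},T)$ gives the claim. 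No new estimates are needed here — all the substance sits in Theorem~\ref{thm:HigherOrder} — so there is no genuine obstacle; the only detail worth flagging is the repeated appeal to uniqueness used to identify each restarted solution with the original weak solution.
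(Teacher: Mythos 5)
Your argument is correct and is essentially the paper's own proof: both bootstrap regularity by selecting, on successive subintervals of $(0,\eps)$, times at which $\BB$ lies in $H^{j}$ (possible since $\BB\in L^{2}$ in time with values in $H^{j}$ there) and restarting via Theorem~\ref{thm:HigherOrder}, the only cosmetic difference being your uniform steps of length $\eps/k$ versus the paper's dyadic steps $\eps 2^{-j}$. Your explicit appeal to uniqueness to identify each restarted solution with the original is a detail the paper leaves implicit, but it changes nothing of substance.
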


\begin{proof}
Fix $\eps > 0$. We already know that $\uu, \BB \in L^{2}(0, T; H^{1}(\Omega)$, so for some time $t_{1} < \eps/2$, $\uu(t_{1}), \BB(t_{1}) \in H^{1}(\Omega)$. Applying Theorem~\ref{thm:HigherOrder}, we obtain
\[
\uu, \BB \in L^{\infty}(\eps/2, T; H^{1}(\Omega)) \cap L^{2}(\eps/2, T; H^{2}(\Omega)). 
\]
Furthermore, if we know that
\[
\uu, \BB \in L^{\infty}(\eps(1 - 2^{1-k}), T; H^{k-1}(\Omega)) \cap L^{2}(\eps(1 - 2^{1-k}), T; H^{k}(\Omega)),
\]
then there is some time $t_{k}$ such that $\eps(1 - 2^{1-k}) < t_{k} < \eps(1 - 2^{-k})$ and $\uu(t_{k}), \BB(t_{k}) \in H^{k}(\Omega)$, and so applying Theorem~\ref{thm:HigherOrder}, we obtain
\[
\uu, \BB \in L^{\infty}(\eps(1 - 2^{-k}), T; H^{k}(\Omega)) \cap L^{2}(\eps(1 - 2^{-k}), T; H^{k+1}(\Omega)),
\]
The result follows by induction on $k$.
\end{proof}

We will prove Theorem~\ref{thm:HigherOrder} by induction on $k$. Since the base case and the induction step require rather different arguments, we split them into two separate propositions.

\begin{proposition}
Suppose $\BB_{0} \in H^{1}(\Omega)$ with $\Div \BB_{0} = 0$. Then, for any $T > 0$, the unique weak solution of \eqref{eqn:StokesMHD-Rep} satisfies
\[
\uu, \BB \in L^{\infty}(0, T; H^{1}(\Omega)) \cap L^{2}(0, T; H^{2}(\Omega)).
\]
\end{proposition}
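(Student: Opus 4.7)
The plan is to perform a higher-order energy estimate at the Galerkin (or mollified) level from Section~\ref{sec:ExistenceUniqueness}, where the computations are rigorous, and then transfer the uniform bounds to the weak limit by lower semicontinuity. First I would test the $\BB$ equation with $-\Laplace \BB$ and the Stokes equation with $-\Laplace \uu$, add them, and obtain
\[
\tfrac{1}{2} \tfrac{\rd}{\rd t} \norm{\Grad \BB}_{L^{2}}^{2} + \eta \norm{\Laplace \BB}_{L^{2}}^{2} + \nu \norm{\Laplace \uu}_{L^{2}}^{2} = I_{1} + I_{2} + I_{3},
\]
where $I_{1} = \inner{(\uu \cdot \Grad) \BB}{\Laplace \BB}$, $I_{2} = -\inner{(\BB \cdot \Grad) \uu}{\Laplace \BB}$, and $I_{3} = -\inner{(\BB \cdot \Grad) \BB}{\Laplace \uu}$.

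For $I_{1}$ I would combine the weak Ladyzhenskaya inequality \eqref{eqn:WeakLadyzhenskaya} applied to $\uu$ with the standard inequality \eqref{eqn:Ladyzhenskaya} applied to $\Grad \BB$ to get
\[
|I_{1}| \leq \norm{\uu}_{L^{4}} \norm{\Grad \BB}_{L^{4}} \norm{\Laplace \BB}_{L^{2}} \leq \tfrac{\eta}{4} \norm{\Laplace \BB}_{L^{2}}^{2} + c \norm{\uu}_{L^{2,\infty}}^{2} \norm{\Grad \uu}_{L^{2}}^{2} \norm{\Grad \BB}_{L^{2}}^{2},
\]
with the prefactor $\norm{\uu}_{L^{2,\infty}}^{2} \norm{\Grad \uu}_{L^{2}}^{2}$ lying in $L^{1}(0,T)$ by Theorem~\ref{thm:WeakSolution}. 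For the pair $I_{2} + I_{3}$ I would exploit the classical MHD cancellation: integrating each Laplacian by parts and then using $\Div \BB = 0$, the symmetric highest-order pieces $\int B_{i}(\pd_{i}\pd_{k}u_{j})(\pd_{k}B_{j})$ appearing in $I_{2}$ and $I_{3}$ appear with opposite signs and cancel, leaving only products of three first derivatives:
\[
I_{2} + I_{3} = \sum_{i,j,k} \int (\pd_{k}B_{i})(\pd_{i}u_{j})(\pd_{k}B_{j}) + \sum_{i,j,k} \int (\pd_{k}B_{i})(\pd_{i}B_{j})(\pd_{k}u_{j}),
\]
hence $|I_{2}+I_{3}| \leq c \norm{\Grad \BB}_{L^{4}}^{2} \norm{\Grad \uu}_{L^{2}}$. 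One further application of Ladyzhenskaya to $\Grad \BB$ together with Young's inequality yields $|I_{2}+I_{3}| \leq \tfrac{\eta}{4} \norm{\Laplace \BB}_{L^{2}}^{2} + c(1 + \norm{\Grad \uu}_{L^{2}}^{2}) \norm{\Grad \BB}_{L^{2}}^{2}$.

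Combining these bounds, absorbing the two $\tfrac{\eta}{4} \norm{\Laplace \BB}_{L^{2}}^{2}$ terms on the left, and writing $\phi(t) := c(1 + \norm{\Grad \uu}_{L^{2}}^{2} + \norm{\uu}_{L^{2,\infty}}^{2} \norm{\Grad \uu}_{L^{2}}^{2})$ leaves
\[
\tfrac{\rd}{\rd t} \norm{\Grad \BB}_{L^{2}}^{2} + \eta \norm{\Laplace \BB}_{L^{2}}^{2} + 2\nu \norm{\Laplace \uu}_{L^{2}}^{2} \leq \phi(t) \norm{\Grad \BB}_{L^{2}}^{2},
\]
with $\phi \in L^{1}(0,T)$. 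Gronwall's inequality would bound $\norm{\Grad \BB(t)}_{L^{2}}^{2}$ uniformly on $[0,T]$, and integrating in time would give $\BB, \uu \in L^{2}(0,T; H^{2})$. The remaining bound $\uu \in L^{\infty}(0,T; H^{1})$ follows from Stokes $H^{1}$-regularity applied to \eqref{eqn:StokesMHD-Rep-u}: $\norm{\uu}_{H^{1}} \leq c \norm{\BB \otimes \BB}_{L^{2}} \leq c \norm{\BB}_{L^{4}}^{2} \leq c \norm{\BB}_{L^{2}} \norm{\BB}_{H^{1}}$ by Ladyzhenskaya. Finally, lower semicontinuity of the Bochner norms with respect to weak-$\ast$ convergence transfers the uniform bounds on the approximations to the weak limit $(\uu, \BB)$.

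The main obstacle is the pair $I_{2}+I_{3}$: a naive bound $|I_{2}| \leq \norm{\BB}_{L^{4}} \norm{\Grad \uu}_{L^{4}} \norm{\Laplace \BB}_{L^{2}}$ would require $\Grad \uu \in L^{4}$, which is available only once $\uu \in H^{2}$ --- precisely the regularity we are trying to prove. The structural cancellation between $I_{2}$ and $I_{3}$, which depends crucially on $\Div \BB = 0$, is what breaks this apparent circularity and reduces the estimate to three factors of $\Grad \BB$ and $\Grad \uu$ that are already controlled at the level of the weak solution.
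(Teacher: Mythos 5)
Your proof is correct, but it handles the problematic terms by a genuinely different mechanism than the paper. The paper does \emph{not} invoke the classical MHD cancellation between $I_{2}$ and $I_{3}$: it bounds each term separately by the ``naive'' estimate $\abs{I_{2}} \leq \norm{\BB}_{L^{4}} \norm{\Grad \uu}_{L^{4}} \norm{\Laplace \BB}_{L^{2}}$, interpolates $\norm{\Grad \uu}_{L^{4}} \leq c \norm{\Grad \uu}_{L^{2}}^{1/2} \norm{\Laplace \uu}_{L^{2}}^{1/2}$, and then absorbs the half-power of $\norm{\Laplace \uu}_{L^{2}}$ into the term $\nu \norm{\Laplace \uu}_{L^{2}}^{2}$ that sits on the left-hand side precisely because the Stokes equation has been tested with $-\Laplace \uu$. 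So your closing remark that the naive bound ``would fail'' because $\Grad \uu \in L^{4}$ is only available once $\uu \in H^{2}$ is not accurate in this setting: the elliptic $\uu$-equation donates a free dissipation-like term $\nu\norm{\Laplace\uu}_{L^2}^2$ that breaks the apparent circularity without any structural cancellation. Your route --- integrating the Laplacians by parts and using $\Div \BB = 0$ to kill the symmetric second-order pieces, leaving $\sum\int (\pd_{k}B_{i})(\pd_{i}u_{j})(\pd_{k}B_{j}) + \sum\int(\pd_{k}B_{i})(\pd_{i}B_{j})(\pd_{k}u_{j})$ --- is the Sermange--Temam argument for full 2D MHD, and it is valid here too (the boundary terms vanish under Dirichlet conditions, and your subsequent bound $\abs{I_{2}+I_{3}} \leq c\norm{\Grad\uu}_{L^{2}}\norm{\Grad\BB}_{L^{4}}^{2}$ closes via Ladyzhenskaya and Young with an integrable Gr\"onwall factor). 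What your approach buys is robustness: it would survive in a model where the $\uu$-equation supplies no $\norm{\Laplace\uu}_{L^2}^2$ on the left. What the paper's approach buys is simplicity: no integration by parts of the nonlinear terms, hence no delicate bookkeeping of boundary terms or of the distinction between $-\Laplace$ and the Stokes operator in a bounded domain. Your remaining steps (performing the estimate at the approximation level and passing to the limit by weak lower semicontinuity, and recovering $\uu \in L^{\infty}(0,T;H^{1})$ from Stokes regularity with $\norm{\Grad\uu}_{L^{2}} \leq c\norm{\BB}_{L^{2}}\norm{\Grad\BB}_{L^{2}}$) match the paper's.
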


\begin{proof}
Take the inner product of \eqref{eqn:StokesMHD-Rep-u} with $-\Laplace \uu$, the inner product of \eqref{eqn:StokesMHD-Rep-B} with $-\Laplace \BB$, and add:
\begin{align*}
&\frac{1}{2} \frac{\rd}{\rd t} \norm{\Grad \BB}_{L^{2}}^{2} + \nu \norm{\Laplace \uu}_{L^{2}}^{2} + \eta \norm{\Laplace \BB}_{L^{2}}^{2} \\
&\qquad \qquad = \inner{(\uu \cdot \Grad) \BB}{\Laplace \BB} - \inner{(\BB \cdot \Grad) \uu}{\Laplace \BB} - \inner{(\BB \cdot \Grad) \BB}{\Laplace \uu}.
\end{align*}
Now
\begin{align*}
\abs{\inner{(\uu \cdot \Grad) \BB}{\Laplace \BB}} &\leq \norm{\uu}_{L^{4}} \norm{\Grad \BB}_{L^{4}} \norm{\Laplace \BB}_{L^{2}} \\
&\leq c \norm{\uu}_{L^{2,\infty}}^{1/2} \norm{\Grad \uu}_{L^{2}}^{1/2} \norm{\Grad \BB}_{L^{2}}^{1/2} \norm{\Laplace \BB}_{L^{2}}^{3/2} \\
&\leq \frac{\eta}{6} \norm{\Laplace \BB}_{L^{2}}^{2} + c \norm{\uu}_{L^{2,\infty}}^{2} \norm{\Grad \uu}_{L^{2}}^{2} \norm{\Grad \BB}_{L^{2}}^{2}
\end{align*}
and
\begin{align*}
\abs{\inner{(\BB \cdot \Grad) \uu}{\Laplace \BB}} &\leq \norm{\BB}_{L^{4}} \norm{\Grad \uu}_{L^{4}} \norm{\Laplace \BB}_{L^{2}} \\
&\leq c \norm{\Grad \uu}_{L^{2}}^{1/2} \norm{\Laplace \uu}_{L^{2}}^{1/2} \norm{\BB}_{L^{2}}^{1/2} \norm{\Grad \BB}_{L^{2}}^{1/2} \norm{\Laplace \BB}_{L^{2}} \\
&\leq \frac{\eta}{6} \norm{\Laplace \BB}_{L^{2}}^{2} + \frac{\nu}{4} \norm{\Laplace \uu}_{L^{2}}^{2} + c \norm{\Grad \uu}_{L^{2}}^{2} \norm{\BB}_{L^{2}}^{2} \norm{\Grad \BB}_{L^{2}}^{2}
\end{align*}
and
\begin{align*}
\abs{\inner{(\BB \cdot \Grad) \BB}{\Laplace \uu}} &\leq \norm{\BB}_{L^{4}} \norm{\Grad \BB}_{L^{4}} \norm{\Laplace \uu}_{L^{2}} \\
&\leq c \norm{\BB}_{L^{2}}^{1/2} \norm{\Grad \BB}_{L^{2}} \norm{\Laplace \BB}_{L^{2}}^{1/2} \norm{\Laplace \uu}_{L^{2}} \\
&\leq \frac{\nu}{4} \norm{\Laplace \uu}_{L^{2}}^{2} + \frac{\eta}{6} \norm{\Laplace \BB}_{L^{2}}^{2} + c \norm{\BB}_{L^{2}}^{2} \norm{\Grad \BB}_{L^{2}}^{4}
\end{align*}
hence
\begin{align}
&\frac{1}{2} \frac{\rd}{\rd t} \norm{\Grad \BB}_{L^{2}}^{2} + \frac{\nu}{2} \norm{\Laplace \uu}_{L^{2}}^{2} + \frac{\eta}{2} \norm{\Laplace \BB}_{L^{2}}^{2} \notag \\
&\qquad \leq c \norm{\Grad \BB}_{L^{2}}^{2} \left( \norm{\uu}_{L^{2,\infty}}^{2} \norm{\Grad \uu}_{L^{2}}^{2} + \norm{\Grad \uu}_{L^{2}}^{2} \norm{\BB}_{L^{2}}^{2}  +  \norm{\BB}_{L^{2}}^{2} \norm{\Grad \BB}_{L^{2}}^{2} \right) . \label{eqn:H1-Estimate}
\end{align}
Since the integral of the last bracket is finite, by Gr\"onwall's inequality we get that $\BB \in L^{\infty}(0,T; H^{1}(\Omega))$, and hence reusing this bound in \eqref{eqn:H1-Estimate} yields $\uu, \BB \in L^{2}(0, T; H^{2}(\Omega))$. Finally, take the inner product of \eqref{eqn:StokesMHD-Rep-u} with $\uu$ to obtain
\[
\nu \norm{\Grad \uu}_{L^{2}}^{2} \leq \norm{\BB}_{L^{4}}^{2} \norm{\Grad \uu}_{L^{2}},
\]
so
\begin{align*}
\nu \norm{\Grad \uu}_{L^{2}} \leq \norm{\BB}_{L^{4}}^{2} \leq c \norm{\BB}_{L^{2}} \norm{\Grad \BB}_{L^{2}},
\end{align*}
and since the right-hand side is bounded, $\uu \in L^{\infty}(0, T; H^{1}(\Omega))$.
\end{proof}

To prove the induction step, we will need a higher-order estimate on the nonlinear term.

\begin{lemma}
\label{lemma:CF-Nonlinear}
Let $s > n/2$ be an integer, and let $\uu \in H^{s}(\Omega)$ and $\vv \in H^{s+1}(\Omega)$ such that $\Div \uu = \Div \vv = 0$. Then
\[
\norm{(\uu \cdot \Grad) \vv}_{H^{s}} \leq c_{\mathrm{N}} \norm{\uu}_{H^{s}} \norm{\vv}_{H^{s+1}}.
\]
\end{lemma}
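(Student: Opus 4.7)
The plan is to reduce the estimate to the Banach algebra property of $H^{s}(\Omega)$ when $s > n/2$. Since $(\uu \cdot \Grad) \vv = \sum_{i=1}^{n} \uu_{i} \pd_{i} \vv$, it suffices to control each product $\uu_{i} \pd_{i} \vv$ in $H^{s}$ by $\norm{\uu_{i}}_{H^{s}} \norm{\pd_{i} \vv}_{H^{s}}$, and then absorb the derivative on $\vv$ into the $H^{s+1}$ norm.

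First I would recall (or quote from a standard reference such as \cite{book:MajdaBertozzi}, Proposition 1.11, or Chapter 13 of Taylor's PDE~III) the classical Moser-type inequality: for any integer $s > n/2$ and any $f, g \in H^{s}(\Omega)$,
\[
\norm{f g}_{H^{s}} \leq c_{s} \norm{f}_{H^{s}} \norm{g}_{H^{s}}.
\]
The proof uses the Leibniz rule to expand $D^{\alpha}(fg)$ for $|\alpha| \leq s$, splits the resulting products so that at least one factor carries no more than $\lfloor s/2 \rfloor$ derivatives, and then uses the Sobolev embedding $H^{s}(\Omega) \hookrightarrow L^{\infty}(\Omega)$ (valid precisely because $s > n/2$) together with the embedding $H^{s-k} \hookrightarrow L^{p_{k}}$ for appropriate $p_{k}$ and H\"older's inequality. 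On a bounded Lipschitz domain one applies the result on $\R^{n}$ after extension; on $\R^{n}$ itself and on the torus the proof proceeds directly (e.g.\ via Littlewood--Paley decomposition or direct computation).

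Granted the algebra property, the estimate is immediate: componentwise,
\[
\norm{(\uu \cdot \Grad) \vv}_{H^{s}} \leq \sum_{i=1}^{n} \norm{\uu_{i} \pd_{i} \vv}_{H^{s}} \leq c_{s} \sum_{i=1}^{n} \norm{\uu_{i}}_{H^{s}} \norm{\pd_{i} \vv}_{H^{s}} \leq c_{\mathrm{N}} \norm{\uu}_{H^{s}} \norm{\vv}_{H^{s+1}},
\]
where in the last step I use that $\norm{\pd_{i} \vv}_{H^{s}} \leq \norm{\vv}_{H^{s+1}}$ and that each component norm is controlled by the full vector norm. Note that the divergence-free conditions play no role in this bound; they are presumably listed in the hypothesis because that is how the lemma will be applied in the next section.

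The only potentially delicate point is the algebra property on a bounded Lipschitz domain with non-smooth boundary, but since $H^{s}(\Omega)$ on a Lipschitz domain can be realised as restrictions of $H^{s}(\R^{n})$ functions via Stein's extension theorem, the result transfers from $\R^{n}$ without difficulty. For the periodic case one simply works with Fourier series. So I expect no real obstacle here: the whole lemma is essentially one invocation of a well-known Moser inequality.
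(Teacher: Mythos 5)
Your proof is correct and follows essentially the same route as the paper, which simply invokes the Banach algebra property of $H^{s}(\Omega)$ for $s>n/2$ (citing Adams--Fournier for bounded domains and Constantin--Foias for the periodic and whole-space cases). Your additional observations --- the reduction $\norm{\uu_{i}\pd_{i}\vv}_{H^{s}}\leq c\norm{\uu_{i}}_{H^{s}}\norm{\pd_{i}\vv}_{H^{s}}$, the extension argument for Lipschitz domains, and the fact that the divergence-free hypothesis is not actually used --- are all consistent with the paper's one-line proof.
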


\begin{proof}
When $\Omega$ is a bounded domain, this follows easily from the fact that $H^{s}$ is a Banach algebra for $s > n/2$ (see Theorem~4.39 in \cite{book:AdamsFournier}). For the periodic and $\R^{n}$ cases, this actually holds for any \emph{real} number $s > n/2$; see Lemma 10.4 in \cite{book:ConstantinFoias1988}.
\end{proof}

With this in hand, we proceed to the proof of the induction step.

\begin{proposition}
Fix an integer $k \geq 2$. Let $\BB_{0} \in H^{k}(\Omega)$ with $\Div \BB_{0} = 0$. Suppose that
\[
\uu, \BB \in L^{\infty}(0, T; H^{k-1}(\Omega)) \cap L^{2}(0, T; H^{k}(\Omega)).
\]
Then
\[
\uu, \BB \in L^{\infty}(0, T; H^{k}(\Omega)) \cap L^{2}(0, T; H^{k+1}(\Omega)).
\]
\end{proposition}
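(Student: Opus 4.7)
The plan is to close an $H^{k}$ energy estimate by differentiating the system. For every multi-index $\alpha$ with $|\alpha|=k$, apply $D^{\alpha}$ to~\eqref{eqn:StokesMHD-Rep-u} and \eqref{eqn:StokesMHD-Rep-B}, take the $L^{2}$ inner product with $D^{\alpha}\uu$ and $D^{\alpha}\BB$ respectively, and sum over $|\alpha|=k$ before adding the two identities. Using $\Div\uu = \Div\BB = 0$, this produces
\[
\frac{1}{2}\frac{\rd}{\rd t}\|\BB\|_{\dot{H}^{k}}^{2} + \nu\|\uu\|_{\dot{H}^{k+1}}^{2} + \eta\|\BB\|_{\dot{H}^{k+1}}^{2} = I_{1} + I_{2} + I_{3},
\]
where $I_{1},I_{2},I_{3}$ are the trilinear terms coming from $(\uu\cdot\Grad)\BB$, $(\BB\cdot\Grad)\uu$, and $(\BB\cdot\Grad)\BB$.

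Because $k\geq 2 > n/2 = 1$, Lemma~\ref{lemma:CF-Nonlinear} applies with $s=k$ and yields, after Cauchy--Schwarz,
\[
|I_{1}| \leq \|(\uu\cdot\Grad)\BB\|_{H^{k}}\|\BB\|_{H^{k}} \leq c_{\mathrm N}\|\uu\|_{H^{k}}\|\BB\|_{H^{k+1}}\|\BB\|_{H^{k}},
\]
and analogous bounds for $I_{2}$ and $I_{3}$, each featuring exactly one $H^{k+1}$ factor. Young's inequality absorbs every such factor into the dissipation on the left and produces
\[
\frac{\rd}{\rd t}\|\BB\|_{H^{k}}^{2} + \nu\|\uu\|_{H^{k+1}}^{2} + \eta\|\BB\|_{H^{k+1}}^{2} \leq C\bigl(\|\uu\|_{H^{k}}^{2} + \|\BB\|_{H^{k}}^{2}\bigr)\|\BB\|_{H^{k}}^{2}.
\]
The inductive hypothesis guarantees $\|\uu\|_{H^{k}}^{2} + \|\BB\|_{H^{k}}^{2}\in L^{1}(0,T)$, so Gr\"onwall's inequality forces $\BB \in L^{\infty}(0,T;H^{k})$; reinserting this bound into the differential inequality then yields $\BB, \uu \in L^{2}(0,T;H^{k+1})$. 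To upgrade $\uu$ to $L^{\infty}(0,T;H^{k})$ we finally invoke elliptic regularity for the Stokes operator applied to \eqref{eqn:StokesMHD-Rep-u}, $\|\uu\|_{H^{k}} \leq c\|\BB\otimes\BB\|_{H^{k-1}}$, estimating the right-hand side by the Banach algebra property of $H^{k-1}$ when $k\geq 3$, and by the bilinear inequality $\|fg\|_{H^{1}} \leq c(\|f\|_{L^{\infty}}\|g\|_{H^{1}} + \|f\|_{H^{1}}\|g\|_{L^{\infty}})$ together with the Sobolev embedding $H^{2}\subset L^{\infty}$ in two dimensions for the base case $k=2$.

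The main obstacle is the \emph{rigorous} justification of the initial energy identity at the regularity level actually available: pairing $\pd_{t}\BB$ with $(-\Laplace)^{k}\BB$ legitimately requires precisely the $L^{2}_{t}H^{k+1}_{x}$ bound on $\BB$ and the $L^{2}_{t}H^{k-1}_{x}$ bound on $\pd_t\BB$ that we are in the process of proving. As in the proof of Theorem~\ref{thm:WeakSolution}, the estimate should first be performed on the Galerkin approximations (or the mollified system on $\R^{2}$), where every quantity is smooth and all integrations by parts are lawful, and then passed to the limit by weak-$*$ lower semicontinuity on the left-hand side and the strong convergence already established to identify the limit on the right. The use of Lemma~\ref{lemma:CF-Nonlinear} at exactly the threshold $s=k=2$ is the reason the argument works in two dimensions.
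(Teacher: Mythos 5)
Your proof is correct and follows essentially the same route as the paper: the $H^{k}$ energy estimate closed with Lemma~\ref{lemma:CF-Nonlinear} (valid since $k \geq 2 > n/2$), Young's inequality to absorb the single $H^{k+1}$ factor in each trilinear term, and Gr\"onwall followed by reinsertion. The only minor differences are that you recover $\uu \in L^{\infty}(0,T;H^{k}(\Omega))$ from Stokes elliptic regularity plus product estimates rather than by testing \eqref{eqn:StokesMHD-Rep-u} against $(-1)^{k-1}\Laplace^{k-1}\uu$ (and against $-\Laplace\uu$ with Ladyzhenskaya when $k=2$), and that you rightly flag that the formal pairing should be justified at the level of the Galerkin or mollified approximations, a point the paper leaves implicit.
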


\begin{proof}
Take the inner product of \eqref{eqn:StokesMHD-Rep-u} with $(-1)^{k} \Laplace^{k} \uu$, the inner product of \eqref{eqn:StokesMHD-Rep-B} with $(-1)^{k} \Laplace^{k} \BB$, and add:
\begin{align*}
&\frac{1}{2} \frac{\rd}{\rd t} \norm{\BB}_{H^{k}}^{2} + \nu \norm{\uu}_{H^{k+1}}^{2} + \eta \norm{\BB}_{H^{k+1}}^{2} \\
&\; = (-1)^{k} \left[ \inner{(\BB \cdot \Grad) \uu}{\Laplace^{k} \BB} + \inner{(\BB \cdot \Grad) \BB}{\Laplace^{k} \uu} - \inner{(\uu \cdot \Grad) \BB}{\Laplace^{k} \BB} \right] \\
&\; \leq c \left[ \norm{(\BB \cdot \Grad) \uu}_{H^{k}} \norm{\BB}_{H^{k}} + \norm{(\BB \cdot \Grad) \BB}_{H^{k}} \norm{\uu}_{H^{k}} + \norm{(\uu \cdot \Grad) \BB}_{H^{k}} \norm{\BB}_{H^{k}} \right] \\
&\; \leq c \left[ \norm{\BB}_{H^{k}}^{2} \norm{\uu}_{H^{k+1}} + 2 \norm{\BB}_{H^{k}} \norm{\BB}_{H^{k+1}} \norm{\uu}_{H^{k}} \right] \\
&\; \leq \frac{\nu}{2} \norm{\uu}_{H^{k+1}}^{2} + \frac{\eta}{2} \norm{\BB}_{H^{k+1}}^{2} + c \left( \norm{\uu}_{H^{k}}^{2} + \norm{\BB}_{H^{k}}^{2} \right) \norm{\BB}_{H^{k}}^{2}
\end{align*}
so
\begin{equation}
\label{eqn:Hk-Estimate}
\frac{\rd}{\rd t} \norm{\BB}_{H^{k}}^{2} + \nu \norm{\uu}_{H^{k+1}}^{2} + \eta \norm{\BB}_{H^{k+1}}^{2} \leq c \norm{\BB}_{H^{k}}^{2} \left( \norm{\uu}_{H^{k}}^{2} + \norm{\BB}_{H^{k}}^{2} \right) .
\end{equation}
Since the integral of the last bracket is finite, by Gr\"onwall's inequality we get that $\BB \in L^{\infty}(0,T; H^{k}(\Omega))$, and hence reusing this bound in \eqref{eqn:Hk-Estimate} yields $\uu, \BB \in L^{2}(0, T; H^{k+1}(\Omega))$.

Finally, if $k \geq 3$ take the inner product of \eqref{eqn:StokesMHD-Rep-u} with $(-1)^{k-1} \Laplace^{k-1} \uu$ to obtain
\[
\nu \norm{\uu}_{H^{k}}^{2} \leq c \norm{(\BB \cdot \Grad) \BB}_{H^{k-1}} \norm{\uu}_{H^{k-1}} \leq \norm{\BB}_{H^{k-1}} \norm{\BB}_{H^{k}} \norm{\uu}_{H^{k-1}},
\]
and since the right-hand side is bounded, $\uu \in L^{\infty}(0,T; H^{k}(\Omega))$. In the case $k=2$, Lemma~\ref{lemma:CF-Nonlinear} does not apply to $\norm{(\BB \cdot \Grad) \BB}_{H^{k-1}}$, and so instead we take the inner product of \eqref{eqn:StokesMHD-Rep-u} with $- \Laplace \uu$ and estimate as follows:
\[
\nu \norm{\Laplace \uu}_{L^{2}}^{2} \leq \abs{\inner{(\BB \cdot \Grad) \BB}{\Laplace \uu}} \leq \norm{\BB}_{L^{4}} \norm{\Grad \BB}_{L^{4}} \norm{\Laplace \uu}_{L^{2}},
\]
so
\[
\norm{\Laplace \uu}_{L^{2}} \leq \norm{\BB}_{L^{2}}^{1/2} \norm{\Grad \BB}_{L^{2}} \norm{\Laplace \BB}_{L^{2}}^{1/2},
\]
and since the right-hand side is bounded, $\uu \in L^{\infty}(0,T; H^{2}(\Omega))$.
\end{proof}

This completes the proof of Theorem~\ref{thm:HigherOrder}, and hence also Theorem~\ref{thm:MainTheorem}.

\section{Non-resistive case ($\eta = 0$)}
\label{sec:NonResistive}

In the above we have developed an essentially complete theory of existence, uniqueness, and regularity for the system \eqref{eqn:StokesMHD} when $\eta>0$.

The non-resistive case ($\eta=0$) is much more difficult, and analogous to the vorticity formulation of the 3D Euler equations in the same way that the resistive system has similarlities with the 3D Navier-Stokes system (as discussed in the introduction). Two-dimensional models with similar structure to these canonical 3D equations (such as the 2D surface quasigeostrophic equation \cite{art:CMT1994}) have attracted considerable attention in recent years, and we plan to present an analysis of \eqref{eqn:StokesMHD} with $\eta=0$ in a future paper.

\bibliographystyle{abbrvnat}
\bibliography{MHDStokesPaper}   

\end{document}